\DeclareMathOperator\lsd{deg}
\DeclareMathOperator\Range{Range}
\def\cyl{\Sigma_\omega}
\def\oo{\Omega_\omega}
\def\oto{\Omega_{t \omega}}
\def\uo{u_\omega}
\def\uto{u_{t\omega}}
\def\plane{\mathbb R^{N - 1}}
\def\sobsp{H_0^1(\oo \cup \Gamma)}
\def\xnspace{H_{0, x_N}^1(\oo \cup \Gamma)}
\def\holdsp{\mathcal X}
\def\mxn{m^{x_N}}
\def\sol{\mathcal S}
\def\curve{\mathcal C}
\def\concomp{\mathcal K}
\numberwithin{equation}{section}
\theoremstyle{plain}
\newtheorem{theorem}{Theorem}[section]
\theoremstyle{plain}
\newtheorem{prop}[theorem]{Proposition}
\theoremstyle{plain}
\newtheorem{lemma}[theorem]{Lemma}
\theoremstyle{plain}
\newtheorem{cor}[theorem]{Corollary}
\theoremstyle{definition}
\theoremstyle{definition}
\newtheorem{definition}[theorem]{Definition}
\theoremstyle{definition}
\newtheorem{remark}[theorem]{Remark}
\theoremstyle{definition}
\theoremstyle{plain}
\begin{document}

\renewcommand{\labelenumi}{\textit{(\roman{enumi})}}

\makeatletter
\newcommand{\proofpart}[2]{%
  \par
  \addvspace{\medskipamount}%
  \noindent\emph{Part #1: #2}\par\nobreak
  \addvspace{\smallskipamount}%
  \@afterheading
}
\makeatother

\makeatletter
\def\author@andify{%
  \nxandlist {\unskip ,\penalty-1 \space\ignorespaces}%
    {\unskip {} \@@and~}%
    {\unskip , \penalty-2}%
}
\makeatother

\title[D. G. Afonso - Semilinear equations in bounded cylinders]{Semilinear equations in bounded cylinders: \\
Morse index and bifurcation from one-dimensional solutions}

\author[Danilo Gregorin Afonso]{Danilo Gregorin Afonso}

\address[Danilo Gregorin Afonso]{Dipartimento di Scienze Pure e Applicate \\
Università degli Studi di Urbino Carlo Bo \\ Piazza della Repubblica 13, 61029 Urbino, Italy}

\email{danilo.gregorinafonso@uniurb.it}

\subjclass[2020]{35A02, 35A16, 35B06, 35B32, 35J60}

\keywords{Relative Dirichlet problems, cylindrical domains, one-dimensional solutions, Morse index, bifurcation}

\begin{abstract}
In this paper, we study semilinear elliptic equations in domains where there is a natural class of solutions, which depend only on one variable, and whose simple geometry reflects the geometry of the domain. We prove that under quite general assumptions, other types of solutions also exist. More precisely, we consider one-dimensional solutions in bounded cylinders and, combining a suitable separation of variables with the theory of ordinary differential equations, we show how to compute the Morse index of such solutions. The Morse index is then used to prove local and global bifurcation results.
\end{abstract}

%\date{\today}
\maketitle

\section{Introduction}
\label{sec:intro}

An old and natural question in the qualitative theory of elliptic partial differential equations is to determine the conditions under which the solutions of differential problems inherit the geometric properties of the domain.

Concerning the radial symmetry, a fundamental step was the celebrated result of Gidas, Ni, and Nirenberg (\cite{GidasNiNirenberg1979}), which is based on the moving planes method and roughly states that positive solutions to semilinear Dirichlet problems in balls must be radial and radially decreasing. See also \cite{GidasNiNirenberg1981, BerestyckiNirenberg1988, BerestyckiNirenberg1990, BerestyckiNirenberg1991} for related results and developments of the method. Regarding the uniqueness/multiplicity of radial solutions, see \cite{NiNussbaum1985}. On the other hand, there is also a vast literature on symmetry-breaking results. See, for example, the papers \cite{GladialiGrossiPacellaSrikanth2010, Gladiali2010, Gladiali2017, GladialiIanni2020, CiraoloPacellaPolvara2023}, the monograph \cite{DamascelliPacella2019}, and the references therein.

Radial solutions can be seen as depending only on one variable, the distance to the central point. Our aim in this paper is to analyze another kind of ``one-dimensional solution", which is not related to invariance with respect to the action of any symmetry group but is nonetheless quite natural in the geometric setting we consider.

Let us denote a point in $\mathbb R^N$ by $x = (x', x_N)$, where $x' \in \plane$ and $x_N \in \mathbb R$, and let $\omega \subset \plane$, $N \geq 2$, be a smooth bounded domain. We consider in $\mathbb R^N$ the infinite half-cylinder spanned by $\omega$,
\begin{equation*}
\cyl \coloneqq \omega \times (0, + \infty) = \{x = (x', x_N) \in \mathbb R^N \ : \ x' \in \omega, \ x_N > 0\},
\end{equation*}
and set
\begin{equation*}
\oo \coloneqq \{x = (x', x_N) \in \cyl \ : \ x_N < 1\}
\end{equation*}
and
\begin{equation*}
\Gamma_0 \coloneqq \{x = (x', x_N) \in \cyl \ : \ x_N = 1\}.
\end{equation*}
Finally, we denote $\Gamma \coloneqq \partial \oo \setminus \overline \Gamma_0$.

The set $\Gamma_0$ is usually said to be the relative boundary of the bounded cylinder $\oo$ with respect to $\cyl$. We are interested in studying relative Dirichlet problems (i.e., with the Dirichlet condition imposed only on $\Gamma_0$) for semilinear elliptic equations in $\oo$. More precisely, for $f \in C^{1, \alpha}(\mathbb R)$, $\alpha \in (0, 1)$, we consider the problem
\begin{equation}
\label{eq:pde}
\left\{
\begin{array}{rcll}
- \Delta u & = & f(u) & \quad \text{ in } \oo \\
u & = & 0 & \quad \text{ on } \Gamma_0 \\
\displaystyle \frac{\partial u}{\partial \nu} & = & 0 & \quad \text{ on } \Gamma
\end{array}
\right.
,
\end{equation}
where $\nu$ denotes the outer unit normal vector to $\partial \oo$. The Neumann condition is the natural boundary condition to impose on the boundary of $\cyl$, for example, if one thinks that the PDE models the diffusion of some quantity $u$ which should not leave $\cyl$. Furthermore, under appropriate hypotheses on the nonlinearity $f$, this problem is analogous to pure Dirichlet problems in terms of the variational formulation. 

We say that $u \in \sobsp$ is a weak solution of \eqref{eq:pde} if
\begin{equation*}
    \int_{\oo} \nabla u \nabla v \ dx - \int_{\oo} f(u) v \ dx = 0 \quad \forall v \in \sobsp,
\end{equation*}
where $\sobsp$ is the subspace of $H^1(\oo)$ of functions whose trace vanishes on $\Gamma_0$. Whenever they exist, weak solutions are the critical points of the functional
\begin{equation}
\label{eq:def_functional}
J(v) = \frac{1}{2} \int_{\oo} |\nabla v|^2 \ dx - \int_{\oo} F(v) \ dx, \quad v \in \sobsp,
\end{equation}
where $F(s) = \int_0^s f(r) \ dr$.

Our interest in relative problems in cylinders comes from the fact that, under certain points of view, the domain $\oo$ is analogous in $\Sigma_\omega$ to balls in $\mathbb R^N$ (or, more in general, to spherical sectors in cones). One such aspect is that $\Gamma_0$ is the only surface of constant mean curvature in a certain class (see \cite{AfonsoIacopettiPacella2023Cheeger} and the references therein).  

Another analogy with balls in $\mathbb R^N$ is that in $\oo$ there are special solutions of \eqref{eq:pde} that inherit the geometry of the cylinder, in the sense that they depend solely on the ``height" variable $x_N$, and which could be analogous to radial solutions in balls. We call them one-dimensional solutions and denote them by $\uo$. They can be obtained by solving, for example by some standard variational method, the ordinary differential equation
\begin{equation*}
\left\{
\begin{array}{ll}
- u'' = f(u) & \quad \text{ in } (0, 1) \\
u'(0) = u(1) = 0 & 
\end{array}
\right.
\end{equation*}
and then setting
\begin{equation*}
\uo(x', x_N) = \widetilde u(x_N), \quad (x', x_N) \in \oo,
\end{equation*}
where $\widetilde u$ is a solution of \eqref{eq:ode}. Note that one-dimensional solutions satisfy the relative overdetermined condition $\frac{\partial \uo}{\partial \nu} = \text{constant}$ on $\Gamma_0$, and we obtain a parallel with balls once again, in view of the celebrated result of Serrin (\cite{Serrin1971}). 

A natural question is then to understand if there is an analogous of the famous result of Gidas, Ni, and Nirenberg, \cite{GidasNiNirenberg1979}, namely, if all positive solutions are one-dimensional. This is the main objective of the present paper, and we anticipate that the answer to this question is negative: there are other kinds of solutions, under very general assumptions on the data $f$ and $\omega$, so the parallel between balls in $\mathbb R^N$ and $\oo$ breaks down. 

If instead one considers the analogous of Serrin's problem, the parallel also breaks down: it is shown in \cite{AfonsoIacopettiPacella2024Energypublished} that, under certain conditions (both on $\omega$ and $f$), one may expect that there exist domains in the cylinder $\cyl$ whose relative boundaries are not flat and which admit solutions to the relative overdetermined problem. When $f \equiv 1$ and $\omega$ is an interval, the existence of such domains was already contained in \cite{FallMinlendWeth2017}. Recently, the existence of such domains was extended to general bases $\omega \subset \mathbb R^{N - 1}$ (but always for $f \equiv 1$), see \cite{PacellaRuizSicbaldi2024}.

One-dimensional solutions are not so much considered in the literature as their radial counterparts, and, to my knowledge, have always been studied in unbounded domains, since the pioneer works of Berestycki, Caffarelli, and Nirenberg (\cite{BerestyckiCaffarelliNirenberg1997a, BerestyckiCaffarelliNirenberg1997b}) in the half-space $\mathbb R^{N - 1} \times (0, + \infty)$, which lead to a famous conjecture later disproved by Sicbaldi in dimension $N \geq 3$ (\cite{Sicbaldi2010}, see also \cite{RosRuizSicbaldi2017, RosRuizSicbaldi2019}). Always on the classification of one-dimensional solutions in half-spaces, see \cite{Cortazaretal2016, FarinaSciunzi2017}, and also \cite{delPinoKowalczykWei2011} on a conjecture of De Giorgi.

Mixed boundary value problems in (unbounded) cylindrical domains $\omega \times (0, \infty)$ have been treated in \cite{ChenWuYao2023}. In this work, strong rigidity results are found, in the sense that solutions to the differential problems under analysis are found to be one-dimensional under quite weak assumptions. The authors also obtain conditions for periodicity/monotonicity of the one-dimensional solutions.

In our bounded framework, the situation is quite the opposite. Roughly speaking, our main result is that, for a wide class of nonlinearities $f$, there exist solutions to \eqref{eq:pde} which are not one-dimensional, for ``almost every shape" of the base $\omega$. This is achieved by means of bifurcation theory, as we show that solutions that are not one-dimensional bifurcate from one-dimensional solutions. One key aspect of our geometric setting is that the Morse index of one-dimensional solutions is easy to compute. We refer to Sections \ref{sec:computation_Morse_index_one_dimensional_solutions} and \ref{sec:bifurcation} for detailed statements and further comments.

This paper is organized as follows. Section \ref{sec:notations_assumptions} collects the main assumptions and recalls some preliminaries. In Section \ref{sec:computation_Morse_index_one_dimensional_solutions} we carry out a detailed study of one-dimensional solutions, showing how to compute the Morse index and that they are always nondegenerate in the space of one-dimensional functions. In the final Section \ref{sec:bifurcation} we prove our main results on bifurcation of other solutions from one-dimensional solutions.

\section{Notations, assumptions, and preliminaries}
\label{sec:notations_assumptions}

We assume that $f$ is a $C^{1, \alpha}$ nonlinearity such that
\begin{enumerate}[label=${(f_\arabic*)}$]
    \item \label{it:f1} the superlinear condition
    \begin{equation}
        \label{eq:superlinear_condition}
        f'(s) > \frac{f(s)}{s} \quad \forall s \in \mathbb R \setminus \{0\};
    \end{equation}

    \item \label{it:f2} a solution of \eqref{eq:pde} exists;

    \item \label{it:f3} $f$ ``preserves the sign":
    \begin{equation}
        \label{eq:sign_of_f}
        sf(s) > 0 \quad \forall s \neq 0.
    \end{equation}
\end{enumerate}

It is widely known that there is a plethora of classes of nonlinearities $f$ for which variational methods yield both positive and sign-changing solutions of \eqref{eq:pde} in $\sobsp$ (recall that $\sobsp$ is the subspace of $H^1(\oo)$ of functions whose trace vanishes on $\Gamma_0$). Of course, the same variational methods, when restricted to the space $\xnspace$ (which is the subspace of functions in $H_0^1(\oo \cup \Gamma)$ which depend only on the $x_N$ variable), yield the existence of one-dimensional solutions.

The archetype function satisfying \ref{it:f1}-\ref{it:f3} is the Lane-Emden nonlinearity
\begin{equation*} 
f(u) = |u|^{p - 2}u,
\end{equation*}
for $p \in (1, p^*)$, where $p^* = \frac{N + 2}{N - 2}$, if $N \geq 3$, or $p^* = \infty$ if $N = 2$, is the critical exponent for the Sobolev embedding $H_0^1(\oo \cup \Gamma) \hookrightarrow L^2(\oo)$. More in general, one could consider $f$ satisfying \ref{it:f1} and, in addition:
\begin{itemize}
    \item subcritical growth: there exists $a \in L^{\frac{2N}{N + 2}}(\oo)$ and $b > 0$ such that 
    \begin{equation*}
        |f(s)| < a(x) + b|s|^p \quad \forall s \in \mathbb R,
    \end{equation*}
    with $p \in (1, p^*)$;

    \item $f(s) = o(s)$ as $s \to 0$;

    \item Ambrosetti-Rabinowitz condition: there exists $\theta > 2$ and $r > 0$ such that for
    \begin{equation*}
        0 < \theta F(s) \leq s f(s) \quad \forall |s| \geq r.
    \end{equation*}
\end{itemize}
Then there exist positive solutions via either the Mountain Pass Theorem or minimization on the Nehari manifold. Moreover, it can be shown that such solutions have Morse index one (see the end of this section for a brief recall of the Morse index). We refer to \cite{DamascelliPacella2019} for the details.

For many more examples of classes of nonlinearities for which \eqref{eq:pde} has a solution, we refer the reader to \cite{Rabinowitz1986, Willem1996, BadialeSerra2011}.

\begin{remark}
\label{rem:regularity}
Since $f$ is assumed to be of class $C^{1, \alpha}$, then every weak solution of \eqref{eq:pde} is also in $C^2(\overline \oo)$. This follows by standard regularity arguments, considering the boundary conditions and the fact that $\partial \oo$ is made by the union of $(N - 1)$-dimensional smooth manifolds (with boundary) intersecting orthogonally (see \cite[Proposition 6.1]{PacellaTralli2020}).
\end{remark}

Let $\uo$ be a one-dimensional solution of \eqref{eq:pde}. Recall that they can be found by solving the ODE
\begin{equation}
\label{eq:ode}
\left\{
\begin{array}{ll}
- u'' = f(u) & \quad \text{ in } (0, 1) \\
u'(0) = u(1) = 0 & 
\end{array}
\right.
\end{equation}
and then setting
\begin{equation}
\label{eq:def_one_dim_sol}
\uo(x', x_N) = \widetilde u(x_N), \quad (x', x_N) \in \oo,
\end{equation}
where $\widetilde u$ is a solution of \eqref{eq:ode}. Notice that reflecting $\widetilde u$ we obtain a solution of the Dirichlet problem in $(-1, 1)$. At the same time, every solution of this Dirichlet problem satisfies also \eqref{eq:ode}, by the results of \cite{GidasNiNirenberg1979}, and thus the two problems are equivalent.

We will sometimes write $\uo$ also for the solution of \eqref{eq:ode} in $(0, 1)$. The meaning will be clear from the context.

The proofs of our main results are based on the theory of bifurcation, and our arguments rely heavily on the interplay between two eigenvalue problems. The first one is the eigenvalue problem for the linearized operator associated with the ODE \eqref{eq:ode}: 
\begin{equation}
\label{eq:auxiliary_one_dim_eigenvalue_problem}
\left\{
\begin{array}{ll}
- z'' - f'(\uo) z = \alpha z & \quad \text{ in } (0, 1) \\
z'(0) = z(1) = 0 & 
\end{array}
\right.
.
\end{equation}
We denote its eigenvalues, which are all simple (see \cite{Hartman2002book}), by $\alpha_i$, $i \in \mathbb N^+$. The second one is for the Neumann-Laplacian in the $(N - 1)$-dimensional domain $\omega$ that spans the cylinder:
\begin{equation}
\label{eq:Neumann_eigenvalue_problem_in_omega}
\left\{
\begin{array}{rcll}
- \Delta_{\plane} \psi & = & \lambda \psi & \quad \text{ in } \omega \\
\displaystyle \frac{\partial \psi}{\partial \nu} & = & 0 & \quad \text{ on } \partial \omega
\end{array}
\right.
,
\end{equation}
where $- \Delta_{\plane} = - \sum_{j = 1}^{N - 1} \frac{\partial^2}{\partial x_j^2}$ is the Laplace operator in $\plane$, that is, with respect to the variable $x' = (x_1, \ldots, x_{N - 1})$. We denote its eigenvalues, counted with multiplicity, by
\begin{equation*}
0 = \lambda_0(\omega) < \lambda_1(\omega) \leq \lambda_2(\omega) \leq \ldots.
\end{equation*}

The Morse index $m(u)$ of a weak solution $u$ is given by the number of negative eigenvalues of the linearized operator
\begin{equation}
\label{eq:linearized_operator}
L_u = - \Delta - f'(u)
\end{equation}
in the space $\sobsp$. Note that if $u = \uo$ is a one-dimensional solution, we can restrict our attention to the eigenvalues of $L_{\uo}$ whose corresponding eigenfunctions are one-dimensional, which are precisely the eigenvalues of \eqref{eq:auxiliary_one_dim_eigenvalue_problem}. We denote by $\mxn(\uo)$ the Morse index of $\uo$ in $\xnspace$, which is the subspace of functions in $H_0^1(\oo \cup \Gamma)$ which depend only on the $x_N$ variable. A solution $u$ is said to be degenerate if the linearized operator $L_{u}$ admits zero as an eigenvalue, or, in other words, if there exists a nontrivial weak solution $\varphi \in \sobsp$ to the problem
\begin{equation*}
\left\{
\begin{array}{rcll}
- \Delta \varphi - f'(u) \varphi & = & 0 & \quad \text{ in } \oo \\
\varphi & = & 0 & \quad \text{ on } \Gamma_0 \\
\displaystyle \frac{\partial \varphi}{\partial \nu} & = & 0 & \quad \text{ on } \Gamma
\end{array}
\right.
.
\end{equation*}
Analogous definitions and statements hold in the subspace $\xnspace$ of one-dimensional functions and also for functions of one real variable in the space 
$$
\mathcal H^1(0, 1) \coloneqq \{z \in H^1(0, 1) \ : \ z(1) = 0\}.
$$
For more details on these matters, we refer the reader to \cite{DamascelliPacella2019}.

Let $u$ be a weak solution of \eqref{eq:pde} (which is also a classical solution, in view of Remark \ref{rem:regularity}). We denote by $\mathcal Z(u)$ the nodal set of $u$ in $\oo$:
\begin{equation*}
    \mathcal Z(u) = \{x \in \oo \ : \ u(x) = 0\}.
\end{equation*}
By a nodal domain, we mean a connected component of $\oo \setminus \mathcal Z(u)$, that is, a region of $\oo$ where $u$ does not change sign. We denote by $n(u)$ the number of nodal domains of the solution $u$.

It is well-known that the superlinear condition \eqref{eq:superlinear_condition} implies that the Morse index of a solution $u$ is bounded from below by the number of nodal regions:
\begin{equation}
    \label{eq:bound_m_n}
    m(u) \geq n(u).
\end{equation}
Indeed, the Morse index can be characterized as the maximal dimension of a subspace of $\sobsp$ where the quadratic form associated with the linearized operator $L_u$ is negative definite. If $u$ satisfies \eqref{eq:pde}, then multiplying the equation by $u$ and integrating by parts yield
\begin{equation*}
    \int_{\oo} |\nabla u|^2 \ dx = \int_{\oo} f(u) u \ dx.
\end{equation*}
On the other hand, the quadratic form associated to $L_u$ is
\begin{equation*}
    L_u[v, v] = \int_{\oo} |\nabla v|^2 \ dx - \int_{\oo} f'(u) v^2 \ dx, \quad v \in \sobsp.
\end{equation*}
Hence \eqref{eq:superlinear_condition} implies 
\begin{align}
    L_u[u, u]
    & = \int_{\oo} |\nabla u|^2 \ dx - \int_{\oo} f'(u) u^2 \ dx \nonumber \\
    & = \int_{\oo} \left(\frac{f(u)}{u}  - f'(u)\right)u^2 \ dx \nonumber \\
    & < 0. \nonumber 
\end{align}
This argument can be repeated using the restriction of $u$ to a nodal region, in such a way that each nodal region yields a (linearly independent) direction where the second bilinear form corresponding to $L_u$ is negative definite, which in turn yield \eqref{eq:bound_m_n}.

For the bifurcation arguments, we will consider a fixed $(N - 1)$-dimensional smooth bounded domain $\omega$, consider scalings $t\omega$, with $t > 0$, and use $t$ as the bifurcation parameter. We denote by $\oto$ the bounded cylinder spanned by the scaled domain, and by $\Gamma_{0, t}$ and $\Gamma_t$ the parts of $\partial \oto$ corresponding to $\Gamma_0$ and $\Gamma$, respectively.

\section{One-dimensional solutions}
\label{sec:computation_Morse_index_one_dimensional_solutions}

\subsection{Decomposition of the spectrum of the linearized operator}

Let $\uo$ be a one-dimensional solution. Recall that we adopt the notation $\uo$ also for the real function $\widetilde u$ that solves \eqref{eq:ode} in $(0, 1)$. To analyze the spectrum of $L_{\uo}$, it is convenient to consider the one-dimensional eigenvalue problem \eqref{eq:auxiliary_one_dim_eigenvalue_problem}.

In what follows, we denote by $L_{\uo}^{x_N}$ the linear operator defined by
\begin{equation*}
L_{\uo}^{x_N}(z) = - z'' - f'(\uo) z, \quad z \in \mathcal H^1(0, 1).
\end{equation*}
It is immediately seen that the Morse index of $\uo$ in $\mathcal H^1(0, 1)$ is precisely $\mxn(\uo)$ in $\xnspace$. 

We have the following decomposition of the spectrum of $L_{\uo}$, obtained in \cite{AfonsoIacopettiPacella2024Energypublished} and which we report here for the convenience of the reader:

\begin{lemma}
\label{lem:spectrum_decomposition}
Let $\uo$ be a one-dimensional solution of \eqref{eq:pde}. The spectra of $L_{\uo}$, $L_{\uo}^{x_N}$ and $- \Delta_{\plane}$ are related by
\begin{equation*}
    \sigma(L_{\uo}) = \sigma(L_{\uo}^{x_N}) + \sigma(- \Delta_{\plane}),
\end{equation*}
where $\sigma(\cdot)$ denotes the spectrum of a linear operator.
\end{lemma}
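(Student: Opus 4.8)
The plan is to prove the identity by separation of variables, exploiting that $\oo = \omega \times (0,1)$ is a product domain, so that $L^2(\oo)$ is the Hilbert tensor product $L^2(\omega) \otimes L^2(0,1)$.

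\emph{Step 1: the two building blocks.} Since $\omega$ is a bounded smooth domain, the Neumann Laplacian $-\Delta_{\plane}$ on $\omega$ has compact resolvent, hence discrete spectrum $0 = \lambda_0(\omega) \le \lambda_1(\omega) \le \cdots \to +\infty$ with an associated orthonormal basis $\{\psi_k\}_{k \ge 0}$ of $L^2(\omega)$; by elliptic regularity $\psi_k \in C^\infty(\overline\omega)$. Likewise, $L_{\uo}^{x_N} z = -z'' - f'(\uo) z$ on $\mathcal{H}^1(0,1)$, with the mixed conditions $z'(0) = z(1) = 0$, is a regular Sturm--Liouville operator: it is self-adjoint with compact resolvent, so its eigenvalues are the simple numbers $\alpha_i$, $i \in \mathbb{N}^+$, of \eqref{eq:auxiliary_one_dim_eigenvalue_problem}, with an orthonormal basis $\{\phi_i\}_{i \ge 1}$ of $L^2(0,1)$, and each $\phi_i \in C^2([0,1])$ since $f'(\uo) \in C^{0,\alpha}([0,1])$ (recall $\uo \in C^2$ and $f \in C^{1,\alpha}$). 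Finally, $L_{\uo}$, the self-adjoint operator on $L^2(\oo)$ associated with the quadratic form $\int_{\oo}(|\nabla v|^2 - f'(\uo)v^2)\,dx$ on $\sobsp$, also has compact resolvent because $\oo$ is bounded and the embedding $\sobsp \hookrightarrow L^2(\oo)$ is compact; hence $\sigma(L_{\uo})$ consists of eigenvalues.

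\emph{Step 2: separated eigenfunctions.} Set $\Phi_{i,k}(x',x_N) \coloneqq \psi_k(x')\,\phi_i(x_N)$. The family $\{\Phi_{i,k}\}_{i \ge 1,\,k \ge 0}$ is an orthonormal basis of $L^2(\oo)$, being the tensor product of the two bases of Step 1. Each $\Phi_{i,k}$ lies in $\sobsp$ and satisfies the boundary conditions defining $L_{\uo}$: on $\Gamma_0 = \omega \times \{1\}$ one has $\Phi_{i,k} = \phi_i(1)\psi_k = 0$; on the lateral boundary $\partial\omega \times (0,1)$ the outward normal is tangent to the slices $\{x_N = \mathrm{const}\}$, so $\partial_\nu \Phi_{i,k} = \phi_i\,\partial_\nu\psi_k = 0$; on the bottom $\omega \times \{0\}$ the outward normal points in the $-x_N$ direction, so $\partial_\nu \Phi_{i,k} = -\psi_k\,\phi_i'(0) = 0$. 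Moreover, a direct computation gives
\[
L_{\uo}\Phi_{i,k} = -(\Delta_{\plane}\psi_k)\phi_i - \psi_k\phi_i'' - f'(\uo)\psi_k\phi_i = \lambda_k(\omega)\psi_k\phi_i + \psi_k\bigl(-\phi_i'' - f'(\uo)\phi_i\bigr) = \bigl(\alpha_i + \lambda_k(\omega)\bigr)\Phi_{i,k},
\]
so each $\Phi_{i,k}$ is an eigenfunction of $L_{\uo}$ with eigenvalue $\alpha_i + \lambda_k(\omega)$. This already yields $\sigma(L_{\uo}^{x_N}) + \sigma(-\Delta_{\plane}) \subseteq \sigma(L_{\uo})$. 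For the reverse inclusion, let $\mu \in \sigma(L_{\uo})$ with eigenfunction $\Psi = \sum_{i,k} c_{i,k}\Phi_{i,k} \not\equiv 0$; expanding $L_{\uo}\Psi = \mu\Psi$ in the orthonormal basis $\{\Phi_{i,k}\}$ and using the previous display forces $(\alpha_i + \lambda_k(\omega) - \mu)\,c_{i,k} = 0$ for all $i,k$, and since some $c_{i,k} \ne 0$ we get $\mu = \alpha_i + \lambda_k(\omega) \in \sigma(L_{\uo}^{x_N}) + \sigma(-\Delta_{\plane})$. Combining the two inclusions gives the claim.

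The point requiring the most care is Step 2 taken as a whole: one needs both that the separated functions $\Phi_{i,k}$ form a \emph{complete} system in $L^2(\oo)$ and that $L_{\uo}$ acts diagonally on it, so that separation of variables captures the entire spectrum rather than only a subset of it. This is exactly where the tensor-product structure $L^2(\omega \times (0,1)) \cong L^2(\omega) \otimes L^2(0,1)$ and the self-adjointness with compact resolvent of all three operators enter; the remaining verifications — the boundary conditions for $\Phi_{i,k}$ and the pointwise computation of $L_{\uo}\Phi_{i,k}$ — are routine. (As already indicated, the statement is due to \cite{AfonsoIacopettiPacella2024Energypublished}; we reproduce the argument only for completeness.)
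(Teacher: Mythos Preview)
Your proof is correct and rests on the same separation-of-variables idea as the paper. The inclusion $\sigma(L_{\uo}^{x_N}) + \sigma(-\Delta_{\plane}) \subseteq \sigma(L_{\uo})$ is handled identically in both: form the product $\psi_k\phi_i$ and compute. For the reverse inclusion the two arguments diverge slightly. The paper fixes a Neumann eigenpair $(\lambda,\psi)$ and projects the $L_{\uo}$-eigenfunction $\varphi$ onto it, setting $z(x_N)=\int_\omega \varphi(x',x_N)\psi(x')\,dx'$; a direct computation with Green's formula then shows $z$ solves $L_{\uo}^{x_N}z=(\mu-\lambda)z$. You instead invoke completeness of the tensor basis $\{\Phi_{i,k}\}$ and read off the eigenvalue from the expansion coefficients. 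Your route is marginally cleaner in that it sidesteps the point the paper leaves implicit, namely that one must choose $\psi$ so that the resulting $z$ is not identically zero (which is guaranteed by completeness of the $\psi_k$, but is not stated). Conversely, the paper's projection argument is more elementary in that it does not appeal to the tensor-product structure of $L^2(\oo)$ or to the self-adjoint spectral machinery; it works directly with a single eigenfunction and Green's identity.
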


\begin{proof}
We begin by showing that $\sigma(L_{\uo}) \subset \sigma(L_{\uo}^{x_N}) + \sigma(- \Delta_{\plane})$. Let $\mu \in \sigma(L_{\uo})$ and let $\varphi \in \sobsp$ be an associated eigenfunction, that is, $\varphi$ is a weak solution of
\begin{equation*}
\left\{
\begin{array}{rcll}
- \Delta \varphi - f'(\uo) \varphi & = & \mu \varphi & \quad \text{ in } \oo \\
\varphi & = & 0 & \quad \text{ on } \Gamma_0 \\
\displaystyle \frac{\partial \varphi}{\partial \nu} & = & 0 & \quad \text{ on } \Gamma
\end{array}
\right.
.
\end{equation*}
As observed in Remark \ref{rem:regularity}, by the shape of $\oo$ and the boundary conditions, elliptic regularity theory yields that $\varphi$ is a classical solution in $\overline \oo$.

Let $\lambda$ be an eigenvalue of $- \Delta_{\plane}$ in $\omega$ with homogeneous Neumann boundary condition and let $\psi$ be a corresponding eigenfunction. Define
\begin{equation*}
    z(x_N) = \int_\omega \varphi(x', x_N) \psi(x') \ dx'.
\end{equation*}
Differentiating twice with respect to $x_N$, using Green's formula and the boundary conditions we obtain
\begin{align}
-z''
& = \int_\omega - \frac{\partial^2 \varphi}{\partial x_N^2} \psi \ dx' \nonumber \\
& = \int_\omega (- \Delta \varphi + \Delta_{\plane} \varphi) \psi \ dx' \nonumber \\
& = \int_\omega f'(\uo) \varphi \psi \ dx' + \int_\omega \lambda \varphi \psi \ dx' + \int_\omega \Delta_{\plane} \psi \varphi \ dx' \nonumber \\
& = f'(\uo) z + \mu z - \lambda \int_\omega \psi \varphi \ dx' \nonumber \\
& = f'(\uo) z + \mu z - \lambda z. \nonumber
\end{align}
Thus $(\mu - \lambda) \in \sigma(L_{\uo}^{x_N})$ and hence $\mu = (\mu - \lambda) + \lambda \in \sigma(L_{\uo}^{x_N}) + \sigma(- \Delta_{\plane})$.

To show the reverse inclusion, let $\alpha \in \sigma(L_{\uo}^{x_N})$, $\lambda \in \sigma(- \Delta_{\plane})$ and let $z$, $\psi$ be respective associated eigenfunctions. Setting
\begin{equation*}
\varphi(x', x_N) = z(x_N) \psi(x'), \quad (x', x_N) \in \oo
\end{equation*}
we have
\begin{align}
- \Delta \varphi
& = -z'' \psi - \Delta_{\plane} \psi z \nonumber \\
& = f'(\uo) z \psi + \alpha z \psi + \lambda z \psi \nonumber \\
& = f'(\uo) \varphi + (\alpha + \lambda) \varphi. \nonumber
\end{align}
As a consequence, $\alpha + \lambda \in \sigma(L_{\uo})$, and the proof is complete.
\end{proof}

\begin{cor}
\label{cor:condition_for_nondegeneracy}
Let $\uo$ be a one-dimensional solution of \eqref{eq:pde}. Then $\uo$ is degenerate if and only if there exist $i \in \mathbb N^+$ and $j \in \mathbb N$ such that
\begin{equation*}
\alpha_i + \lambda_j = 0.
\end{equation*}
\end{cor}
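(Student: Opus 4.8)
The proof is an almost immediate consequence of Lemma~\ref{lem:spectrum_decomposition}. Recall that, by definition, the one-dimensional solution $\uo$ is degenerate (as a solution of \eqref{eq:pde} in $\sobsp$) precisely when $0 \in \sigma(L_{\uo})$. The plan is therefore to translate this condition through the spectral decomposition $\sigma(L_{\uo}) = \sigma(L_{\uo}^{x_N}) + \sigma(-\Delta_{\plane})$ and then identify the two summand spectra with the eigenvalues $\alpha_i$ and $\lambda_j$ introduced in Section~\ref{sec:notations_assumptions}.

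First I would observe that the spectrum of $L_{\uo}^{x_N}$, acting on $\mathcal H^1(0,1)$, is exactly $\{\alpha_i : i \in \mathbb N^+\}$: this is the eigenvalue problem \eqref{eq:auxiliary_one_dim_eigenvalue_problem}, a regular Sturm--Liouville problem on the bounded interval $(0,1)$ with the mixed Neumann--Dirichlet conditions $z'(0) = z(1) = 0$, whose spectrum is a sequence of simple eigenvalues $\alpha_1 < \alpha_2 < \cdots$ (as recalled from \cite{Hartman2002book}). Likewise, the spectrum of $-\Delta_{\plane}$ on $\omega$ with homogeneous Neumann conditions is $\{\lambda_j(\omega) : j \in \mathbb N\}$ by \eqref{eq:Neumann_eigenvalue_problem_in_omega}. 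Hence Lemma~\ref{lem:spectrum_decomposition} reads
\begin{equation*}
\sigma(L_{\uo}) = \{\alpha_i + \lambda_j : i \in \mathbb N^+, \ j \in \mathbb N\}.
\end{equation*}

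Now $\uo$ is degenerate if and only if $0$ lies in this set, i.e.\ if and only if there are indices $i \in \mathbb N^+$ and $j \in \mathbb N$ with $\alpha_i + \lambda_j = 0$, which is precisely the claim. For the ``if'' direction one additionally notes that the product $z \psi$ of the corresponding eigenfunctions (as constructed in the proof of Lemma~\ref{lem:spectrum_decomposition}) is a nontrivial element of $\sobsp$ in the kernel of $L_{\uo}$; for the ``only if'' direction, an eigenfunction for $\mu = 0$ decomposes, via the argument in that lemma, into pieces realizing some $\alpha_i$ and $\lambda_j$ with $\alpha_i + \lambda_j = 0$.

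There is essentially no obstacle here beyond bookkeeping: the only point requiring a word of care is that the equality in Lemma~\ref{lem:spectrum_decomposition} is a genuine set equality of spectra, so membership of $0$ on the left is equivalent to the existence of a decomposition $0 = \alpha + \lambda$ on the right — one should make sure the completeness of the Neumann eigenbasis $\{\psi_j\}$ on $\omega$ is what guarantees that \emph{every} eigenfunction of $L_{\uo}$ is captured by the decomposition, so that no degeneracy is missed. Granting Lemma~\ref{lem:spectrum_decomposition}, this is already built in, and the corollary follows in a couple of lines.
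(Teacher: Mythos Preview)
Your proposal is correct and follows exactly the paper's approach: the corollary is stated in the paper without a separate proof, being an immediate consequence of Lemma~\ref{lem:spectrum_decomposition} together with the definition of degeneracy as $0 \in \sigma(L_{\uo})$. Your additional remarks on the Sturm--Liouville and Neumann spectra and on the explicit eigenfunction $z\psi$ are accurate but go beyond what the paper spells out.
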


\subsection{Morse index computation}

Our objective in this section is to understand the one-dimensional Morse index $\mxn(\uo)$. To this aim, we analyze the eigenvalue problem corresponding to the linearized operator associated with the ODE \eqref{eq:ode}:
\begin{equation*}
\left\{
\begin{array}{ll}
- z'' - f'(\uo) z = \alpha z & \quad \text{ in } (0, 1) \\
z'(0) = z(1) = 0 & 
\end{array}
\right.
.
\end{equation*}
The spectral theory for this kind of problem is well-established in the literature (see, e.g., \cite[Chapter XI]{Hartman2002book}). It is known that there exists a sequence of real eigenvalues 
$$
\alpha_1 < \alpha_2 < \ldots < \alpha_i < \ldots,
$$
with $\alpha_i \to + \infty$ as $i \to \infty$; corresponding to each eigenvalue $\alpha_i$, there exists a unique (up to multiplicative constants) eigenfunction $z_i$; for each $i \in \mathbb N^+$, the eigenfunction $z_i$ has precisely $i - 1$ zeros in $(0, 1)$.

Our first result relates the one-dimensional Morse index $\mxn$ with the number of nodal regions.

\begin{prop}
\label{prop:computation_of_mxn}
Assume that \ref{it:f1}-\ref{it:f3} hold. Let $\uo \in \sobsp$ be a one-dimensional solution of \eqref{eq:pde} with $n(\uo) = n \geq 1$ nodal domains. Then
\begin{equation*}
    \mxn(\uo) = n.
\end{equation*}
\end{prop}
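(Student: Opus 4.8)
The goal is to show $\mxn(\uo) = n$ where $n = n(\uo)$ is the number of nodal domains of the one-dimensional solution. Since the Morse index in $\xnspace$ coincides with the Morse index of $\uo$ as a solution of the ODE \eqref{eq:ode} in $\mathcal H^1(0,1)$, and since the eigenvalues $\alpha_i$ of \eqref{eq:auxiliary_one_dim_eigenvalue_problem} are simple with $\alpha_i \to +\infty$, the quantity $\mxn(\uo)$ is exactly the number of indices $i$ with $\alpha_i < 0$. So the statement is equivalent to the two inequalities $\alpha_n < 0$ and $\alpha_{n+1} > 0$.

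\emph{Lower bound $\mxn(\uo) \geq n$.} This follows directly from the general nodal-domain bound \eqref{eq:bound_m_n} applied in the one-dimensional setting: the superlinear condition \ref{it:f1} gives, on each of the $n$ nodal intervals $(a_{k-1}, a_k)$ of $\widetilde u$ in $(0,1)$, that the restriction $\widetilde u \chi_{(a_{k-1},a_k)}$ (extended by zero) is an admissible test function in $\mathcal H^1(0,1)$ with $L_{\uo}^{x_N}[\,\cdot\,,\cdot\,] < 0$, exactly by the computation reproduced in Section~\ref{sec:notations_assumptions} (multiply $-\widetilde u'' = f(\widetilde u)$ by $\widetilde u$, integrate by parts over the nodal interval — the boundary terms vanish because $\widetilde u$ vanishes at interior nodes, at $x_N=1$, and has $\widetilde u'(0)=0$ — and use $f(s)/s < f'(s)$). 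These $n$ functions have disjoint supports, hence are linearly independent and span an $n$-dimensional subspace on which the quadratic form is negative definite, so $\mxn(\uo) \geq n$, i.e.\ $\alpha_n < 0$.

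\emph{Upper bound $\mxn(\uo) \leq n$.} This is the step I expect to be the main obstacle, and the natural tool is Sturm oscillation / comparison theory. The key observation is that $\widetilde u$ itself solves the linearized-type equation in a comparison sense: differentiating $-\widetilde u'' = f(\widetilde u)$ is not available (we would need $f \in C^2$), but instead one compares $-z'' - f'(\uo) z = \alpha z$ with the equation satisfied by $\widetilde u$, namely $-\widetilde u'' - \frac{f(\widetilde u)}{\widetilde u}\widetilde u = 0$. By \ref{it:f1} we have $f'(\uo(x_N)) > \frac{f(\uo(x_N))}{\uo(x_N)}$ wherever $\uo(x_N) \neq 0$, so the potential in the $\alpha = 0$ linearized operator dominates the potential in the equation for $\widetilde u$. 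Sturm comparison then says solutions of the linearized equation with $\alpha = 0$ oscillate \emph{at least as fast} as $\widetilde u$; since $\widetilde u$ has $n-1$ interior zeros in $(0,1)$ and its nodal structure accounts for the interval $(0,1)$ with the boundary conditions $\widetilde u'(0) = 0$, $\widetilde u(1)=0$, the eigenfunction $z_{n+1}$ (which has exactly $n$ interior zeros) must correspond to $\alpha_{n+1} > 0$ — equivalently, there is no eigenvalue $\alpha \leq 0$ whose eigenfunction has $n$ or more zeros. One makes this precise by noting that the map $\alpha \mapsto (\text{number of zeros in }(0,1)\text{ of the solution of the IVP with } z(1)=0)$ is monotone, and that $\widetilde u$ — while not satisfying the right boundary condition at $x_N = 1$ in the eigenfunction sense unless $\widetilde u$ vanishes there, which it does — serves as a subsolution forcing $\alpha_{n+1} \geq 0$; strictness of the inequality in \ref{it:f1} upgrades this to $\alpha_{n+1} > 0$. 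An alternative, perhaps cleaner, route: use the variational characterization $\alpha_{n+1} = \min\{ \max_{z \in V\setminus\{0\}} \mathcal R(z) : \dim V = n+1\}$ of the $(n+1)$-st eigenvalue and show that on any $(n+1)$-dimensional subspace some function has nonnegative Rayleigh quotient, by exhibiting an $n$-codimensional constraint (the functions vanishing appropriately on the nodal intervals, tested against $\widetilde u$ restricted to each) — this is the standard ``at most $n$ negative directions'' argument and dualizes the lower bound cleanly. Combining both bounds gives $\mxn(\uo) = n$.
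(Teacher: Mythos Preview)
Your lower bound $\mxn(\uo)\ge n$ is fine and is exactly what the paper does. The upper bound, however, has a genuine gap.

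First, you dismiss the approach of differentiating $-\widetilde u''=f(\widetilde u)$ on the grounds that it would need $f\in C^2$. This is not so: from $\widetilde u''=-f(\widetilde u)$ with $f\in C^{1,\alpha}$ and $\widetilde u\in C^2$ one bootstraps to $\widetilde u\in C^3$, and then $(\widetilde u')''+f'(\widetilde u)\,\widetilde u'=0$ holds classically. This is precisely the route the paper takes: $\uo'$ solves the \emph{same} second-order equation as the eigenfunctions at $\alpha=0$, it has exactly $n-1$ zeros in $(0,1)$ (one critical point per nodal interval, by a Gidas--Ni--Nirenberg argument), and a direct Sturm--Picone comparison with $z_{n+1}$ (which has $n+1$ zeros in $(0,1]$) forces $\alpha_{n+1}\ge 0$.

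Second, your replacement comparison --- $\widetilde u$ against the equation $z''+f'(\uo)z=0$ via $f'(\uo)>f(\uo)/\uo$ --- goes the wrong way. Writing both problems as $-z''+q\,z=\lambda z$, the linearized operator has the \emph{smaller} potential $q_A=-f'(\uo)<-f(\uo)/\uo=q_B$, so its eigenvalues satisfy $\alpha_i\le\beta_i$. Since $\widetilde u$ (with its $n-1$ interior zeros and the correct boundary conditions) is the $n$-th eigenfunction of the $q_B$-problem, you get $\beta_n=0$ and hence $\alpha_n\le 0$ --- which you already knew --- but nothing about $\alpha_{n+1}$. Equivalently, ``solutions of the linearized equation at $\alpha=0$ oscillate at least as fast as $\widetilde u$'' tells you the $\alpha=0$ IVP solution has \emph{at least} $n-1$ interior zeros, which again bounds $\alpha_n$ from above, not $\alpha_{n+1}$ from below. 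The ``alternative cleaner route'' you sketch (an $n$-codimensional constraint dualizing the lower bound) would require showing that every function orthogonal to the $n$ nodal pieces $\widetilde u|_{I_k}$ has nonnegative Rayleigh quotient; this is not automatic and is essentially the statement to be proved.

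In short: the paper's key idea is to use $\uo'$, not $\uo$, as the comparison function, because $\uo'$ satisfies the linearized equation exactly at $\alpha=0$. Once you allow yourself that differentiation, the Sturm--Picone argument is two lines.
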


\begin{proof}
    It is well-known (see Section \ref{sec:notations_assumptions}) that $\mxn(\uo) \geq n$. Since $\mxn(\uo)$ is the number of negative eigenvalues of \eqref{eq:auxiliary_one_dim_eigenvalue_problem}, we have that
    \begin{equation*}
        \alpha_1 < \alpha_2 < \ldots < \alpha_n < 0.
    \end{equation*}
    Our aim is then to show that $\alpha_{n + 1} \geq 0$.

    Since $\uo$ satisfies the equation $- \uo'' = f(\uo)$ in the interval $(0, 1)$, taking the derivative with respect to $x_N$ and rearranging terms we obtain
    \begin{equation*}
        (\uo')'' + f'(\uo) \uo' = 0 \quad \text{ in } (0, 1)
    \end{equation*}
    Moreover, since $\uo$ has $n$ nodal regions, then we known that $\uo'$ has $n - 1$ zeros in $(0, 1)$ (see Remark \ref{rem:roots_of_u_prime}).

    On the other hand, from the spectral theory for \eqref{eq:auxiliary_one_dim_eigenvalue_problem}, we known that the eigenfunction $z_{n + 1}$ has $n$ zeros in $(0, 1)$ (and another zero at $x_N = 1$) and satisfies
    \begin{equation*}
        z_{n + 1}'' + (f'(\uo) + \alpha_{n + 1}) z_{n + 1} = 0 \quad \text{ in } (0, 1)
    \end{equation*}

    Suppose, for the sake of contradiction, that $\alpha_{n + 1} < 0$. Should this be the case, we would have $f'(\uo) + \alpha_{n + 1} < f'(\uo)$ and therefore the Sturm-Picone comparison theorem would imply that $\uo'$ has at least one zero between each consecutive zero of $z_{n + 1}$. Since $z_{n + 1}$ has $n + 1$ zeros in $[0, 1]$, then it would follow that $\uo'$ has at least $n$ zeros in $(0, 1)$, which is a contradiction with the assumption on the number of nodal regions of $\uo$. Then $\alpha_{n + 1}$ cannot be negative, and the proof is complete.
\end{proof}

\begin{remark}
    \label{rem:roots_of_u_prime}
    In the proof of Proposition \ref{prop:computation_of_mxn} we have tacitly used that between each zero of $\uo$ in $[0, 1]$ there is one and only one critical point (that is to say that $\uo'$ has one and only one zero). Indeed, this follows by arguing as in the classical theorem of Gidas, Ni, and Nirenberg (\cite{GidasNiNirenberg1979}) in each nodal domain of $\uo$, where $\uo$ satisfies a Dirichlet boundary value problem.
\end{remark}

Making use of the decomposition of the spectrum of $L_{\uo}$ obtained in Lemma \ref{lem:spectrum_decomposition}, we can easily compute the Morse index of a one-dimensional solution in the space $\sobsp$:

\begin{theorem}
\label{thm:computation_of_Morse_index}
Assume that \ref{it:f1}-\ref{it:f3} hold. Let $\uo$ be a one-dimensional solution of \eqref{eq:pde} in $\oo$. Then it holds:
\begin{enumerate}
% \item if $\mxn(\uo) = 0$, then $m(\uo) = 0$;

\item if $\mxn(\uo) = 1$, then
\begin{equation*}
m(\uo) = 1 + \# \{j \geq 1 \ : \ \lambda_j < -\alpha_1 \}
\end{equation*}

\item if $\mxn(\uo) = n$, then
\begin{equation*}
m(\uo) = n + \sum_{i = 1}^n \# \{j \geq 1 \ : \ \lambda_j < - \alpha_i\}.
\end{equation*}
\end{enumerate}
\end{theorem}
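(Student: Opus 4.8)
The plan is to combine the spectral decomposition of Lemma~\ref{lem:spectrum_decomposition} with the Morse index computation of Proposition~\ref{prop:computation_of_mxn}, simply by counting negative eigenvalues on both sides. By definition $m(\uo)$ is the number of negative eigenvalues of $L_{\uo}$, and by Lemma~\ref{lem:spectrum_decomposition} every eigenvalue of $L_{\uo}$ is of the form $\alpha_i + \lambda_j$ with $i \in \mathbb N^+$ and $j \in \mathbb N$ (with $\lambda_0(\omega) = 0$), and conversely every such sum is an eigenvalue. The multiplicities also add up correctly: the eigenvalues $\alpha_i$ of the one-dimensional problem are simple, so the multiplicity of a value $\mu \in \sigma(L_{\uo})$ is $\sum_{i} m_\omega(\mu - \alpha_i)$, where $m_\omega$ is the Neumann multiplicity in $\omega$ (and $m_\omega(s) = 0$ if $s \notin \sigma(-\Delta_{\plane})$). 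Hence
\begin{equation*}
m(\uo) = \#\{(i,j) \in \mathbb N^+ \times \mathbb N \ : \ \alpha_i + \lambda_j < 0\},
\end{equation*}
counting pairs $(i,j)$ with the $\lambda_j$ listed with multiplicity.

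Next I would organize this count by the index $i$. By Proposition~\ref{prop:computation_of_mxn}, $\mxn(\uo) = n$ means exactly that $\alpha_1 < \cdots < \alpha_n < 0 \le \alpha_{n+1} < \cdots$. For $i \ge n+1$ we have $\alpha_i \ge 0$, and since $\lambda_j \ge 0$ for all $j$, the sum $\alpha_i + \lambda_j \ge 0$; moreover $\alpha_i + \lambda_j = 0$ would force $\alpha_i = \lambda_j = 0$, and as $\alpha_i$ is an eigenvalue of the nondegenerate (in the one-dimensional sense, by Proposition~\ref{prop:computation_of_mxn}) problem we can exclude equality as well — but in any case a zero eigenvalue does not contribute to the Morse index. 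So only $i \in \{1, \dots, n\}$ contribute. For each such $i$, the condition $\alpha_i + \lambda_j < 0$ reads $\lambda_j < -\alpha_i$, and since $\alpha_i < 0$ this is a genuine (nonvacuous, since $\lambda_0 = 0 < -\alpha_i$) constraint. The contribution from $i$ is therefore
\begin{equation*}
\#\{j \ge 0 \ : \ \lambda_j < -\alpha_i\} = 1 + \#\{j \ge 1 \ : \ \lambda_j < -\alpha_i\},
\end{equation*}
the ``$1$'' accounting for $j = 0$. Summing over $i = 1, \dots, n$ gives
\begin{equation*}
m(\uo) = \sum_{i=1}^{n} \left( 1 + \#\{j \ge 1 \ : \ \lambda_j < -\alpha_i\} \right) = n + \sum_{i=1}^{n} \#\{j \ge 1 \ : \ \lambda_j < -\alpha_i\},
\end{equation*}
which is statement~(ii); statement~(i) is the special case $n = 1$.

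The argument is essentially a bookkeeping exercise once Lemma~\ref{lem:spectrum_decomposition} and Proposition~\ref{prop:computation_of_mxn} are in hand, so there is no serious analytic obstacle. The one point deserving care is the treatment of multiplicities in the spectral decomposition: Lemma~\ref{lem:spectrum_decomposition} as stated is an identity of sets, so I would want to note explicitly that the eigenfunctions $z_i(x_N)\psi_{j}(x')$ obtained from the reverse inclusion in its proof are linearly independent and in fact span the full eigenspace of $L_{\uo}$ for $\mu = \alpha_i + \lambda_j$ — this is where simplicity of the $\alpha_i$ together with a separation-of-variables / completeness argument (the products $z_i \psi_j$ form a Hilbert basis of $\sobsp$) is used to guarantee that no eigenvalue of $L_{\uo}$ is missed and none is overcounted. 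A second minor point is to make sure that when different pairs $(i,j)$ yield the same value $\alpha_i + \lambda_j$, the count of pairs still equals the dimension of the corresponding eigenspace, which again follows from the Hilbert-basis property. With these observations the counting identity above is justified and the theorem follows.
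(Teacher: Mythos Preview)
Your proposal is correct and follows essentially the same route as the paper: use Lemma~\ref{lem:spectrum_decomposition} to write every eigenvalue of $L_{\uo}$ as $\alpha_i+\lambda_j$, observe that only $i\le n$ can give a negative sum because $\alpha_i\ge 0$ for $i\ge n+1$ and $\lambda_j\ge 0$, and then split off the $j=0$ contribution to produce the ``$n$''. Your treatment is in fact slightly more careful than the paper's, since you flag that Lemma~\ref{lem:spectrum_decomposition} is an identity of \emph{sets} and explain why the count with multiplicities is nevertheless correct (completeness of the products $z_i\psi_j$); the paper simply takes this for granted. Two small remarks: the fact that $\mxn(\uo)=n$ is equivalent to $\alpha_1<\cdots<\alpha_n<0\le\alpha_{n+1}$ is just the definition of $\mxn$, not Proposition~\ref{prop:computation_of_mxn}; and the one-dimensional nondegeneracy you invoke is Proposition~\ref{prop:nondegeneracy} rather than Proposition~\ref{prop:computation_of_mxn}.
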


\begin{proof}
For \textit{(i)}, we notice that, since $\mxn(\uo) = 1$, by definition we have
$$
\alpha_1 < 0 \leq \alpha_i \quad \forall i \geq 2.
$$
Therefore, since $\lambda_0 = 0$ and $\lambda_j > 0$ for $j \geq 1$, we have
\begin{align}
m(\uo)
& = \# \{k \geq 1 \ : \ \lambda_k < 0\} \nonumber \\
& = \# \{j \geq 0 \ : \ \alpha_1 + \lambda_j < 0\} \nonumber \\
& = 1 + \# \{j \geq 1 \ : \ \lambda_j < - \alpha_1\}. \nonumber 
\end{align}

For \textit{(ii)}, since $\lambda_0 = 0$ we have
\begin{align}
m(\uo)
& = \# \{k \geq 1 \ : \ \lambda_k < 0\} \nonumber \\
& = \# \{i \geq 1, j \geq 0 \ : \ \lambda_j < - \alpha_i\} \nonumber \\
& = \sum_{i = 1}^n \# \{j \geq 0 \ : \ \lambda_j < - \alpha_i\} \nonumber \\
& = n + \sum_{i = 1}^n \# \{j \geq 1 \ : \ \lambda_j < - \alpha_i\}, \nonumber
\end{align}
which completes the proof.
\end{proof}

Recall that scaling the domain $\omega$ by a factor $t > 0$, we scale the Neumann eigenvalues of $- \Delta_{\plane}$ in $\omega$:
\begin{equation}
\label{eq:scaled_Neumann_eigenvalue}
\lambda_j(t\omega) = \frac{1}{t^2} \lambda_j(\omega),
\end{equation}
for every $j \in \mathbb N$. Combining this with Lemma \ref{lem:spectrum_decomposition}, Corollary \ref{cor:condition_for_nondegeneracy} and Theorem \ref{thm:computation_of_Morse_index} we have the following:

\begin{cor}
Let $\omega \subset \plane$ be a smooth bounded domain and let $\uo$ be a one-dimensional solution of \eqref{eq:pde} in $\oo$. For $t > 0$, let $\uto$ be the corresponding one-dimensional solution in the scaled domain $\oto$. Then there exists a sequence $0 < \bar t_1 < \bar t_2 < \ldots$ such that $u_{\bar t_n \omega}$ is degenerate in the space $H_0^1(\Omega_{\bar t_n \omega} \cup \Gamma_{\bar t_n})$ for all $n \in \mathbb N^+$ and 
\begin{equation*}
m(\uto) \to \infty \quad \text{ as } \quad t \to + \infty.
\end{equation*} 
\end{cor}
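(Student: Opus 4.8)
The key observation is that the one-dimensional profile of $\uto$ is independent of $t$: by \eqref{eq:ode}--\eqref{eq:def_one_dim_sol} one has $\uto(x',x_N)=\widetilde u(x_N)$ for every $t>0$ with the \emph{same} function $\widetilde u$ solving \eqref{eq:ode} on $(0,1)$. Consequently the one-dimensional eigenvalue problem \eqref{eq:auxiliary_one_dim_eigenvalue_problem} — hence its spectrum $\alpha_1<\alpha_2<\dots$ — does not depend on $t$, and $\mxn(\uto)=\mxn(\uo)=n(\uo)=:n$ for every $t>0$ by Proposition~\ref{prop:computation_of_mxn}; in particular $\alpha_1<\dots<\alpha_n<0\le\alpha_{n+1}\le\dots$. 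All the $t$-dependence is therefore carried by the base, through the scaling $\lambda_j(t\omega)=\lambda_j(\omega)/t^2$ of \eqref{eq:scaled_Neumann_eigenvalue}.

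To produce the degeneracy sequence I would argue as follows. By Corollary~\ref{cor:condition_for_nondegeneracy} and \eqref{eq:scaled_Neumann_eigenvalue}, $\uto$ is degenerate if and only if $\alpha_i+\lambda_j(\omega)/t^2=0$ for some $i\in\mathbb N^+$, $j\in\mathbb N$. Since $\lambda_0(\omega)=0$ and $\lambda_j(\omega)>0$ for $j\ge1$, a solution with $j=0$ would force $\alpha_i=0$, i.e.\ $\uo$ degenerate among one-dimensional functions; in that (non-generic) case every $t>0$ yields a degenerate $\uto$ and the first assertion is immediate, so we may assume $\alpha_i\ne0$ for all $i$. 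Then degeneracy of $\uto$ is equivalent to $t=T_{i,j}:=\big(\lambda_j(\omega)/|\alpha_i|\big)^{1/2}$ for some $1\le i\le n$ and $j\ge1$. The set $D:=\{T_{i,j}:1\le i\le n,\ j\ge1\}$ is nonempty, and since $\lambda_j(\omega)\to+\infty$ while there are only finitely many indices $i$, each interval $(0,M]$ meets $D$ in a finite set and $D$ is unbounded above. Hence $D$ may be enumerated as a strictly increasing sequence $0<\bar t_1<\bar t_2<\dots$ with $\bar t_n\to+\infty$, along which $u_{\bar t_n\omega}$ is degenerate in $H_0^1(\Omega_{\bar t_n\omega}\cup\Gamma_{\bar t_n})$.

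For the divergence of the Morse index I use that $\mxn(\uto)=n$ for all $t$, so Theorem~\ref{thm:computation_of_Morse_index} applies and gives
\begin{equation*}
m(\uto)=n+\sum_{i=1}^{n}\#\{\,j\ge1:\lambda_j(t\omega)<-\alpha_i\,\}\ \ge\ n+\#\{\,j\ge1:\lambda_j(\omega)<|\alpha_1|\,t^2\,\}.
\end{equation*}
Since $(\lambda_j(\omega))_{j\ge1}$ is nondecreasing with $\lambda_j(\omega)\to+\infty$, the counting function $\Lambda\mapsto\#\{j\ge1:\lambda_j(\omega)<\Lambda\}$ is finite for every $\Lambda$ but tends to $+\infty$ as $\Lambda\to+\infty$; taking $\Lambda=|\alpha_1|\,t^2\to+\infty$ yields $m(\uto)\to+\infty$ as $t\to+\infty$.

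I do not anticipate a real obstacle: granted Lemma~\ref{lem:spectrum_decomposition}, Theorem~\ref{thm:computation_of_Morse_index} and the scaling identity \eqref{eq:scaled_Neumann_eigenvalue}, the statement is a matter of organizing the interplay of the two spectra. The only point deserving care is the claim $\alpha_i\ne0$ for all $i$ (nondegeneracy of $\uo$ in the one-dimensional setting), needed to ensure that $D$ is discrete rather than all of $(0,+\infty)$; this follows from Proposition~\ref{prop:computation_of_mxn} together with Remark~\ref{rem:roots_of_u_prime} — were $\alpha_{n+1}=0$, the eigenfunction $z_{n+1}$ would solve the same linear ODE as $\uo'$ with the same condition at $x_N=0$, hence be a multiple of $\uo'$, contradicting that the two have a different number of interior zeros — and, as noted, the conclusion holds trivially otherwise.
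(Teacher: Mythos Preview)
Your argument is correct and follows exactly the route the paper takes --- indeed the paper's own proof is a single sentence pointing to the scaling identity \eqref{eq:scaled_Neumann_eigenvalue} and Theorem~\ref{thm:computation_of_Morse_index}; you have simply unpacked the details carefully.

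One caveat: your closing parenthetical argument for $\alpha_{n+1}\neq 0$ is mistaken. The function $\uo'$ satisfies the linear ODE with the condition $\uo'(0)=0$ (the \emph{value} vanishes at $0$), whereas $z_{n+1}$ satisfies $z_{n+1}'(0)=0$ (the \emph{derivative} vanishes at $0$); these are different Cauchy data, so $z_{n+1}$ and $\uo'$ are in general linearly independent, not proportional. The paper establishes $\alpha_{n+1}\neq 0$ separately in Proposition~\ref{prop:nondegeneracy} by an integration-by-parts identity exploiting \ref{it:f3}. This does not affect your main proof, since --- as you yourself note --- if some $\alpha_i=0$ then every $t$ gives a degenerate $\uto$ and the conclusion is immediate; the error is confined to an aside.
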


\begin{proof}
It follows immediately from \eqref{eq:scaled_Neumann_eigenvalue} and Theorem \ref{thm:computation_of_Morse_index}.
\end{proof}

Recall that solutions of Morse index one can be obtained, for example, by minimization on the Nehari manifold. Then, our next result tells us that least energy solutions are not one-dimensional, provided that the base $\omega$ is large enough.

\begin{theorem}
    \label{thm:ground_states}
    Assume that \ref{it:f1}-\ref{it:f3} hold. Furthermore, assume that the one-dimensional solution $\uo$ is unique and there exists a solution $\bar u$ of \eqref{eq:pde} of Morse index one in $\sobsp$. Then, if
    \begin{equation}
        \label{eq:ground_states}
        \lambda_1(\omega) < - \alpha_1,
    \end{equation}
    the solution $\bar u$ is not one-dimensional.
\end{theorem}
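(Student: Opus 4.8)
The plan is to argue by contradiction using the Morse index computation of Theorem~\ref{thm:computation_of_Morse_index}. Suppose $\bar u$ were one-dimensional; then, since the one-dimensional solution is assumed unique, we would have $\bar u = \uo$. The hypothesis that $\bar u$ has Morse index one in $\sobsp$ would then force $m(\uo) = 1$.

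First I would observe that $m(\uo) = 1$ implies in particular $\mxn(\uo) = 1$: indeed, the one-dimensional eigenvalues $\alpha_i$ together with $\lambda_0 = 0$ all appear in $\sigma(L_{\uo})$ by Lemma~\ref{lem:spectrum_decomposition}, so every negative $\alpha_i$ contributes a negative eigenvalue of $L_{\uo}$, and hence $\mxn(\uo) \leq m(\uo) = 1$; on the other hand $\mxn(\uo) \geq n(\uo) \geq 1$ by \eqref{eq:bound_m_n} applied in the one-dimensional space, so $\mxn(\uo) = 1$ and in particular $\alpha_1 < 0 \leq \alpha_2 \leq \ldots$. Then I would invoke part~\textit{(i)} of Theorem~\ref{thm:computation_of_Morse_index}, which gives
\begin{equation*}
m(\uo) = 1 + \#\{j \geq 1 \ : \ \lambda_j < -\alpha_1\}.
\end{equation*}
The assumption \eqref{eq:ground_states} says precisely that $\lambda_1(\omega) < -\alpha_1$, so $j = 1$ belongs to the set on the right-hand side, whence $m(\uo) \geq 2$. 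This contradicts $m(\uo) = 1$, and therefore $\bar u$ cannot be one-dimensional.

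The argument is essentially a bookkeeping consequence of the spectral decomposition, so there is no serious analytic obstacle; the only point that requires a moment's care is the reduction from $m(\uo) = 1$ to $\mxn(\uo) = 1$, i.e.\ making sure one is entitled to apply case~\textit{(i)} of the theorem rather than case~\textit{(ii)} with $n \geq 2$. This is handled by the observation above that $\mxn(\uo)$ is squeezed between $n(\uo) \geq 1$ and $m(\uo) = 1$. One should also note that $-\alpha_1 > 0$ here, so \eqref{eq:ground_states} is a meaningful condition (it is nonempty precisely when $\alpha_1 < 0$, which is guaranteed once a Morse-index-one solution exists and equals $\uo$); no sign subtlety arises because $\lambda_1(\omega) > 0 = \lambda_0(\omega)$ always.
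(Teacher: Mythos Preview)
Your proof is correct and takes essentially the same approach as the paper's one-line argument. The paper simply applies Theorem~\ref{thm:computation_of_Morse_index} to conclude $m(\uo) \geq 2$ directly---the general formula in case~\textit{(ii)} already yields $m(\uo) \geq n + 1 \geq 2$ from $\lambda_1(\omega) < -\alpha_1$ alone, so your intermediate reduction to $\mxn(\uo) = 1$ via the contradiction hypothesis, while not wrong, is unnecessary.
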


\begin{proof}
    By Theorem \ref{thm:computation_of_Morse_index}, it follows that $m(\uo) \geq 2$, and hence $\bar u \neq \uo$. 
\end{proof}

\subsection{Nondegeneracy in the space $\xnspace$}
\begin{prop}    
    \label{prop:nondegeneracy}
    Assume that \ref{it:f1}-\ref{it:f3} hold. Let $\uo$ be a one-dimensional solution of \eqref{eq:pde}. Then $\uo$ is nondegenerate in $\xnspace$.
\end{prop}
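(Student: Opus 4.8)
The plan is to show that the linearized problem \eqref{eq:auxiliary_one_dim_eigenvalue_problem} does not admit $\alpha = 0$ as an eigenvalue, which is exactly the statement that $\uo$ is nondegenerate in $\xnspace$ (recall that, as observed just before Lemma \ref{lem:spectrum_decomposition}, the Morse index and the degeneracy of $\uo$ in $\xnspace$ coincide with those of the operator $L_{\uo}^{x_N}$ acting on $\mathcal H^1(0,1)$). So the whole argument reduces to a one-dimensional ODE statement: the only solution $z \in \mathcal H^1(0,1)$ of $-z'' - f'(\uo) z = 0$ with $z'(0) = z(1) = 0$ is $z \equiv 0$.

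First I would recall, as in the proof of Proposition \ref{prop:computation_of_mxn}, that differentiating the equation $-\uo'' = f(\uo)$ gives $(\uo')'' + f'(\uo)\uo' = 0$ on $(0,1)$, so that $\uo'$ is a solution of the same second-order linear ODE that a hypothetical degenerate direction $z$ would satisfy (namely $w'' + f'(\uo) w = 0$). If $n(\uo) = n$, then by Remark \ref{rem:roots_of_u_prime} the function $\uo'$ has exactly $n-1$ interior zeros in $(0,1)$, and moreover $\uo'(0) = 0$ while $\uo'(1) \neq 0$ (the latter because $\uo(1) = 0$ and $\uo$ is a nontrivial solution of a Dirichlet problem on the last nodal interval, so its derivative at the endpoint cannot vanish — otherwise uniqueness for the ODE would force $\uo \equiv 0$). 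Thus $\uo'$ has precisely $n-1$ zeros in the half-open interval $[0,1)$, counting the one at $x_N = 0$.

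Next I would suppose for contradiction that a nontrivial $z$ as above exists. By the spectral theory quoted after \eqref{eq:ode}, if $0$ were the $k$-th eigenvalue $\alpha_k$ of \eqref{eq:auxiliary_one_dim_eigenvalue_problem}, its eigenfunction $z = z_k$ would have exactly $k-1$ zeros in $(0,1)$ plus the forced zero at $x_N = 1$. Since we have just shown $\alpha_n < 0 \le \alpha_{n+1}$ in Proposition \ref{prop:computation_of_mxn}, the candidate value $0$ would have to be $\alpha_{n+1}$ (it cannot be $\alpha_i$ for $i \le n$ since those are strictly negative, and it cannot be $\alpha_i$ for $i \ge n+2$ unless $\alpha_{n+1} < 0$ as well), so $z = z_{n+1}$ would have exactly $n$ interior zeros in $(0,1)$ together with a zero at $x_N = 1$. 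Now both $z$ and $\uo'$ solve $w'' + f'(\uo) w = 0$ on $(0,1)$; two linearly independent solutions of a second-order linear ODE have strictly interlacing zeros (Sturm separation), while two linearly dependent ones have the same zeros. In the dependent case $z$ is a scalar multiple of $\uo'$, but $z(1) = 0$ and $\uo'(1) \neq 0$, a contradiction. In the independent case, between any two consecutive zeros of $z$ there lies a zero of $\uo'$; $z$ has $n+1$ zeros in $[0,1]$ (the $n$ interior ones plus $x_N=1$), hence at least $n$ zeros of $\uo'$ in $(0,1)$, contradicting that $\uo'$ has only $n-1$ interior zeros. Either way we reach a contradiction, so no such $z$ exists and $\uo$ is nondegenerate in $\xnspace$.

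The main obstacle is the bookkeeping of boundary zeros: one must be careful about whether the zeros at $x_N = 0$ (where $z'(0) = \uo'(0) = 0$, a Neumann endpoint) and at $x_N = 1$ (a Dirichlet endpoint) are counted, and about ruling out the linearly dependent case. The clean way to handle the Neumann endpoint is to reflect everything evenly across $x_N = 0$ — as the excerpt already notes, reflecting $\uo$ produces a solution of the Dirichlet problem on $(-1,1)$ — so that $\uo'$ and $z$ become solutions on $(-1,1)$ with $\uo'$ odd (vanishing at $0$ with nonzero slope) and the zero counting becomes symmetric; then the Sturm separation argument applies on $(-1,1)$ without the awkwardness of a one-sided Neumann condition. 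Alternatively, and perhaps more transparently, one can invoke Sturm–Picone directly as in Proposition \ref{prop:computation_of_mxn}: if $\alpha_{n+1} = 0$ then $f'(\uo) + \alpha_{n+1} = f'(\uo)$, so $z_{n+1}$ and $\uo'$ satisfy the \emph{same} equation, and the Sturm separation theorem (rather than the strict Sturm–Picone inequality) gives interlacing of their zeros, yielding the same contradiction on the zero count of $\uo'$. I would present the reflection version, since it makes the endpoint at $0$ a genuine interior point and the argument becomes a single clean application of the Sturm separation theorem.
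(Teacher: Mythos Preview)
Your argument is correct and takes a genuinely different route from the paper's proof. You exploit that $\uo'$ and a putative eigenfunction $z_{n+1}$ solve the \emph{same} linear ODE and then apply the Sturm separation theorem to derive a contradiction on the number of interior zeros of $\uo'$; this is a natural continuation of the Sturm--Picone argument already used in Proposition~\ref{prop:computation_of_mxn}. The paper instead proves an integral identity: computing $\int_0^1 (\uo' z_{n+1}')' \, dx_N$ in two ways yields $\uo'(1) z_{n+1}'(1) = f(\uo(0)) z_{n+1}(0)$, and then a sign argument (choosing $z_{n+1}(0)\uo(0) > 0$ and observing that the extra nodal region of $z_{n+1}$ forces $\uo'(1) z_{n+1}'(1) < 0$) combined with \ref{it:f3} gives the contradiction $0 > f(\uo(0)) z_{n+1}(0) > 0$. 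Your approach stays entirely within oscillation theory and makes the role of \ref{it:f3} indirect (it enters only through Remark~\ref{rem:roots_of_u_prime}, to guarantee $\uo'$ has exactly $n-1$ interior zeros), whereas the paper's identity is a slick one-line computation but requires justifying the sign of $\uo'(1) z_{n+1}'(1)$ from the nodal structure. One small slip in your write-up: $\uo'$ has $n$ zeros in $[0,1)$ (the $n-1$ interior ones plus the one at $x_N=0$), not $n-1$ as you write; this does not affect your main argument, which correctly uses the count of $n-1$ zeros in the open interval $(0,1)$.
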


\begin{proof}
    Let $n = n(\uo)$ be the number of nodal regions of $\uo$. By Proposition \ref{prop:computation_of_mxn}, we have that
    \begin{equation*}
        \mxn(\uo) = n,
    \end{equation*}
    and therefore $\alpha_1 < \alpha_2 < \ldots < \alpha_n < 0 \leq \alpha_{n + 1} < \ldots$. Our aim is then to show that $\alpha_{n + 1} > 0$.

    Suppose, for the sake of contradiction, that $\alpha_{n + 1} = 0$. By Sturm-Liouville theory, the corresponding eigenfunction $z_{n + 1}$, which satisfies
    \begin{equation*}
        \left\{
        \begin{array}{ll}
            - z_{n + 1}'' - f'(\uo) z_{n + 1} = 0 & \quad \text{ in } (0, 1) \\
            z_{n + 1}'(0) = z_{n + 1}(1) = 0 & 
        \end{array}
        \right.
        ,
    \end{equation*}
    has $n$ zeros in $(0, 1)$, thus $n + 1$ nodal regions.

    Let us choose the sign of $z_{n + 1}$ so that $\uo(0) z_{n + 1}(0) > 0$. Since $z_{n + 1}$ has one more nodal region than $\uo$, we have that $\uo'(1) z_{n + 1}'(1) < 0$ (strict inequality is due to Hopf's Lemma). Now, observe that
    \begin{align}
        0
        & > \uo'(1) z_{n + 1}'(1) \nonumber \\
        & = \uo'(1) z_{n + 1}'(1) - \underbrace{\uo'(0) z_{n + 1}'(0)}_{= 0} \nonumber \\
        & = \int_0^1 (\uo'z_{n + 1}')' \ dx_N \nonumber \\
        & = \int_0^1 \uo'' z_{n + 1}' \ dx_N + \int_0^1 \uo' z_{n + 1}'' \ dx_N \nonumber \\
        & = - \int_0^1 f(\uo) z_{n + 1}' \ dx_N - \int_0^1 f'(\uo) \uo' z_{n + 1} \ dx_N \nonumber \\
        & = - \int_0^1 (f(\uo) z_{n + 1})' \ dx_N \nonumber \\
        & = f(\uo(0)) z_{n + 1}(0) - f(\uo(1))z_{n + 1}(1) \nonumber \\
        & = f(\uo(0)) z_{n + 1}(0) \nonumber \\
        & > 0, \label{eq:nondegeneracy}
    \end{align}
    where the last inequality follows from our choice on the sign of $z_{n + 1}(0)$ and \eqref{eq:sign_of_f}. Since \eqref{eq:nondegeneracy} is absurd, we have that $\alpha_{n + 1} \neq 0$. The proof is complete.
\end{proof}

\section{Bifurcation results}
\label{sec:bifurcation}

In this section, we prove the existence of solutions to \eqref{eq:pde} which are not one-dimensional. We will show that such solutions bifurcate from one-dimensional ones when the domain $\omega$ which spans the cylinder is scaled. In addition to local bifurcation results, we study the structure of the bifurcating solutions, including a global bifurcation result in the spirit of Rabinowitz (\cite{Rabinowitz1971}). See Theorem \ref{thm:bifurcation}.

We fix $\omega$, consider the scalings $t\omega$, with $t > 0$, and use $t$ as the bifurcation parameter. We denote by $\oto$ the bounded cylinder spanned by the scaled domain, and by $\Gamma_t$ the part of the boundary corresponding to $\Gamma$.

Of course, when we scale the domain $\omega$, the PDE changes, since the domain $\oto$ and the functional space $H_0^1(\oto \cup \Gamma_t)$ are different. However, it is more convenient to develop the bifurcation argument in a fixed functional setting, independent of the scaling. We choose to work in the Banach space (endowed with the $C^{1, \alpha}$ norm)
\begin{equation*} 
\holdsp(\oo \cup \Gamma) \coloneqq \left\{u \in C^{1, \alpha}(\overline \oo) \ : \ u|_{\Gamma_0} = 0, \ \left.\frac{\partial u}{\partial \nu}\right|_{\Gamma} = 0 \right\},
\end{equation*}
and we ``bring back" to this space the PDE problems considered in the scaled domains $\oto$ via suitable diffeomorphisms.

To be more precise, denote by $x = (x', x_N)$ the points in the original cylinder $\oo$, and by $y = (y', x_N)$ the points in the scaled cylinder $\oto$. Clearly, $y' = tx'$. We consider the diffeomorphism $h_t: \overline{\oto} \to \overline{\oo}$ given by
\begin{equation*}
    h_t(y', x_N) = \left(\frac{y'}{t}, x_N \right).
\end{equation*}
Naturally, functions defined in the scaled domain correspond to functions in $\oo$, via a scaling of the $x'$ argument. More precisely, the diffeomorphism $h_t$ induces the linear isomorphism $h_t^* : \holdsp(\oto \cup \Gamma_t) \to \holdsp(\oo \cup \Gamma)$ given by
\begin{equation*}
    h_t^*(v) (x', x_N) = v(h_t^{-1}(x', x_N)) = v(tx', x_N).
\end{equation*}

Let us now describe how to bring back the PDE from $\oto$ to $\oo$. Let $u_t \in \holdsp(\oto \cup \Gamma_t)$ be any solution of \eqref{eq:pde} in $\oto$. Then, the function
\begin{equation*}
    u_t^* \coloneqq h_t^*(u_t) = u_t \circ h_t^{-1} \in \holdsp(\oo \cup \Gamma)
\end{equation*}
satisfies
\begin{equation}
    \label{eq:transported_equation}
    \left\{
    \begin{array}{rcll}
        D_t(u_t^*) & = & f(u_t^*) & \quad \text{ in } \oo \\
        u_t^* & = & 0 & \quad \text{ on } \Gamma_0 \\
        \displaystyle \frac{\partial u_t^*}{\partial \nu} & = & 0 & \quad \text{ on } \Gamma
    \end{array}
    \right.
    ,
\end{equation}
where the operator $D_t$ is defined as 
\begin{equation*}
    D_t(u) = h_t^*\left(-\Delta\left((h_t^*)^{-1}(u)\right)\right).
\end{equation*}
Indeed, note that $u_t = u_t^* \circ h_t = (h_t^*)^{-1}(u_t^*)$, and thus, since $- \Delta u_t = f(u_t)$ in $\oto$, we have
\begin{equation*}
    - \Delta \left((h_t^*)^{-1}(u_t^*) \right)(y) = f\left((h_t^*)^{-1}(u_t^*)\right)(y) \quad \text{ in } \oto.
\end{equation*}
Then, writing $y = h_t^{-1}(x)$, for $x \in \overline\oo$, $D_t(u_t^*) = f(u_t^*)$ immediatelly follows. Moreover, the boundary conditions in \eqref{eq:transported_equation} are clearly satisfied.

We can further rewrite the equation $D_t(u) = f(u)$ as $u = T_t(u)$, where
\begin{equation*}
    T_t : \holdsp(\oo \cup \Gamma) \to \holdsp(\oo \cup \Gamma)
\end{equation*}
is the operator defined as
\begin{equation*}
    T_t(u) = D_t^{-1}(f(u)), \quad u \in \holdsp(\oo \cup \Gamma),
\end{equation*}
which can also be written as
\begin{equation}
    \label{eq:def_T_t}
    T_t(u) = h_t^*\left((- \Delta)^{-1}\left((h_t^*)^{-1}(f(u))\right)\right).
\end{equation}
Due to classical regularity results, this operator is well-defined and compact, and therefore, for any open bounded set $U \subset \holdsp(\oo \cup \Gamma)$ such that $I - T_t \neq 0$ on $\partial U$, the Leray-Schauder degree $\deg(I - T_t, U, 0)$ is well-defined.

Finally, note that if $\uo$ is a one-dimensional solution in $\oo$, then 
\begin{equation*}
(\uto)^* = h_t^*(\uto) = \uo.    
\end{equation*}

We can now define what we mean by bifurcating solutions in the scaled domains:
\begin{definition}
    \label{def:bifurcation}
    Let $\uo$ be a one-dimensional solution of \eqref{eq:pde} in $\oo$, and let $\bar t > 0$. We say that a bifurcation occurs at $(\bar t, u_{\bar t \omega})$ if in any neighborhood of $(\bar t, \uo)$ in $(0, + \infty) \times \holdsp(\oo \cup \Gamma)$ there exists a solution of \eqref{eq:transported_equation}. Such other solutions will be referred to as bifurcating solutions.
\end{definition}

\begin{remark}
    \label{rem:symmetry_breaking}
    Solutions of \eqref{eq:transported_equation} in $\oo$ give rise to solutions of the original PDE in $\oto$ via the action of $(h_t^*)^{-1}$. Then Definition \ref{def:bifurcation} says that a bifurcation happens if, for $t$ arbitrarily close to $\bar t$, there exists a solution of \eqref{eq:pde} in $\oto$ close to $\uto$. Moreover, since one-dimensional solutions are never degenerate in the space of one-dimensional functions (Proposition \ref{prop:nondegeneracy}), such new solutions cannot be one-dimensional.
\end{remark}

So far in this section, we have seen how to transport the PDE from the dilated domain $\oto$ back to $\oo$, to obtain an appropriate functional framework for the bifurcation. Our arguments will rely heavily on the behavior of the Leray-Schauder degree of $I - T_t$ in the space $\holdsp(\oo \cup \Gamma)$ when $t$ changes. The key idea is that due to our next result, Lemma \ref{lem:deg_T_deg_P}, we can detect changes on $\lsd(I - T_t, U, 0)$ due to the change in the Morse index of $\uto$, which in turn is a phenomenon happening in the dilated domains/respective functional spaces. 

Lemma \ref{lem:deg_T_deg_P} allows us to ``transport" the calculation of the degree to the dilating domains. Its proof is an application of the following result of Nussbaum:
\begin{prop}[{\cite[Section D, Proposition 2]{Nussbaum1971}}]
    \label{prop:Nussbaum}
    Let $V$ be an open subset of a Banach space $E_1$. Let $C: E_1 \to E_2$ be a compact map into a Banach space $E_2$. Let $\beta: \overline V \to E_2$ be a homeomorphism. Assume that $\beta(x) - C(x) \neq 0$ for $x \in \partial V$. Then
    \begin{equation}
        \label{eq:Nussbaum}
        \lsd(I -(C \circ \beta^{-1}), \beta(V), 0) = \lsd(I - (\beta^{-1} \circ C), C^{-1}(\beta(V)), 0).
    \end{equation}
\end{prop}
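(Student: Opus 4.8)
\emph{Proof proposal.} This is the commutativity property of the Leray--Schauder degree, and I would prove it along the classical lines of Leray--Schauder and Nussbaum: reduce to finite dimensions, where the core fact is the determinant identity $\det(I_m - AB) = \det(I_n - BA)$ for $A \in \mathbb R^{m \times n}$, $B \in \mathbb R^{n \times m}$, applied locally at coincidence points. First I would check that both sides of \eqref{eq:Nussbaum} are well defined: $C \circ \beta^{-1}$ is compact (the compact map $C$ composed with the continuous $\beta^{-1}$), and since $\partial(\beta(V)) \subseteq \beta(\partial V)$ the hypothesis $\beta(x) \neq C(x)$ on $\partial V$ gives $I - C \circ \beta^{-1} \neq 0$ on $\partial(\beta(V))$; symmetrically, a zero of $I - \beta^{-1} \circ C$ on $\partial\bigl(C^{-1}(\beta(V))\bigr)$ would force $\beta(x) = C(x)$ at a point of $\partial V$, again excluded. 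The same computation shows that $x \mapsto \beta(x)$ maps $\{x : \beta^{-1}(C(x)) = x\}$ homeomorphically onto $\{y : C(\beta^{-1}(y)) = y\}$, so the two degrees count the same coincidences; the content of \eqref{eq:Nussbaum} is that they count them with matching signs.

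Next I would set $\delta_0 := \inf_{x \in \partial V} \|\beta(x) - C(x)\| > 0$ and, using compactness of $C$, approximate it by a continuous finite-rank map $C_\varepsilon$ with range in a finite-dimensional subspace $E^n \subset E_2$ and $\sup \|C - C_\varepsilon\| < \varepsilon < \delta_0$. The straight-line homotopy from $C$ to $C_\varepsilon$ is admissible for both degrees, so by homotopy invariance (and excision, since $C^{-1}(\beta(V))$ and $C_\varepsilon^{-1}(\beta(V))$ agree near the coincidence set) I may replace $C$ by $C_\varepsilon$ throughout. As $C_\varepsilon \circ \beta^{-1}$ has range in $E^n$, the reduction property of the Leray--Schauder degree gives
\begin{equation*}
\lsd(I - C_\varepsilon \circ \beta^{-1}, \beta(V), 0) = \lsd\bigl((I - C_\varepsilon \circ \beta^{-1})|_{E^n}, \beta(V) \cap E^n, 0\bigr),
\end{equation*}
a Brouwer degree in $E^n$.

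For the right-hand side I would factor $\beta^{-1} \circ C_\varepsilon = q \circ c_\varepsilon$, where $c_\varepsilon : C_\varepsilon^{-1}(\beta(V)) \to N' := E^n \cap \beta(V)$ is $C_\varepsilon$ with its codomain cut down to the open subset $N'$ of $E^n$, and $q := \beta^{-1}|_{N'} : N' \to E_1$. Since $c_\varepsilon \circ q$ agrees on its domain with $(C_\varepsilon \circ \beta^{-1})|_{E^n}$, the Brouwer degree displayed above is also the Brouwer degree of $I_{E^n} - c_\varepsilon \circ q$; it then remains to prove the commutativity for the pair $c_\varepsilon$, $q$, namely that this equals $\lsd(I_{E_1} - q \circ c_\varepsilon, C_\varepsilon^{-1}(\beta(V)), 0)$, which is the right-hand side of \eqref{eq:Nussbaum} once the approximation is undone. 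Here I would approximate $q$ in turn by a finite-rank map with range in some finite-dimensional $E^m \subset E_1$ (legitimate since $q(N')$ is relatively compact), embed $E^n$ and $E^m$ in a common finite-dimensional space, reduce both degrees to Brouwer degrees there, perturb to isolate the coincidence points, and at each of them apply the determinant identity to the Jacobians of the two finite-rank maps.

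The main obstacle is exactly this last step. Because $\beta$ is only a homeomorphism, $\beta^{-1} \circ C$ does not take values in a linear subspace, so the reduction property cannot be applied to it directly; one has to peel off the finite-rank factor $c_\varepsilon$ first and then carry out the finite-dimensional reduction, keeping track of the (possibly curved, possibly non-open) image sets. Two further points need attention: a homeomorphism need not preserve boundedness, so the reduction and approximation steps should be performed on bounded pieces (intersecting with a large ball, or using compactness of the coincidence set); and the passage from the domain $C^{-1}(\beta(V))$ to $C_\varepsilon^{-1}(\beta(V))$ should be absorbed by excision. Beyond these points, the argument is the standard machinery of the Leray--Schauder degree.
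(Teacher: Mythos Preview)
The paper does not prove this proposition: it is quoted verbatim from Nussbaum's 1971 paper and used as a black box in the proof of Lemma~\ref{lem:deg_T_deg_P}. There is therefore no ``paper's own proof'' to compare your proposal against.

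That said, your sketch is a reasonable outline of the standard argument for the commutativity of the Leray--Schauder degree (finite-rank approximation of the compact map, reduction to Brouwer degree, and the determinant identity $\det(I-AB)=\det(I-BA)$ at isolated coincidence points). You have correctly identified the two delicate points: that $\beta^{-1}\circ C$ does not land in a linear subspace, so the reduction property must be applied only after factoring through the finite-rank piece; and that boundedness is not preserved by the homeomorphism, so one must localize near the (compact) coincidence set before approximating. With those caveats handled, the argument goes through; but for the purposes of this paper the result is simply cited.
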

\begin{remark}
    It is to be understood as part of the statement that the Leray-Schauder degrees in \eqref{eq:Nussbaum} are well-defined.
\end{remark}

\begin{lemma}
    \label{lem:deg_T_deg_P}
    Let $t > 0$. Let $P_t:\holdsp(\oto \cup \Gamma_t) \to \holdsp (\oto \cup \Gamma_t)$ be the operator given by
    \begin{equation}
    \label{eq:def_P_t}
     P_t(v) = (- \Delta)^{-1}(f(v)), \quad v \in \holdsp(\oto \cup \Gamma_t).
    \end{equation}
    Let $U \subset \holdsp(\oo \cup \Gamma)$ be an open bounded set such that $I - T_t \neq 0$ on $\partial U$. Then
    \begin{equation}
        \label{eq:deg_T_deg_P}
        \lsd(I - T_t, U, 0) = \lsd(I - P_t, (h_t^*)^{-1}(U), 0).
    \end{equation}
\end{lemma}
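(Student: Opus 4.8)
**The plan is to apply Nussbaum's proposition (Proposition~\ref{prop:Nussbaum}) with a careful choice of the spaces, the compact map, and the homeomorphism.** Set $E_1 = \holdsp(\oto \cup \Gamma_t)$ and $E_2 = \holdsp(\oo \cup \Gamma)$. The natural candidates are: take $\beta = h_t^*: \overline{V} \to E_2$, which is a linear isomorphism (hence a homeomorphism) for a suitable open set $V \subset E_1$, and take $C: E_1 \to E_2$ to be the composition that produces $T_t$ when post-composed with $\beta^{-1}$. Looking at the formula \eqref{eq:def_T_t}, namely $T_t(u) = h_t^*\bigl((-\Delta)^{-1}\bigl((h_t^*)^{-1}(f(u))\bigr)\bigr)$, we see that $T_t = h_t^* \circ (-\Delta)^{-1} \circ (h_t^*)^{-1} \circ f \circ (\text{incl})$. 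So I would set $C := h_t^* \circ (-\Delta)^{-1} \circ f: E_1 \to E_2$; this is compact because $(-\Delta)^{-1}$ is smoothing and $f$ is continuous (indeed $C^{1,\alpha}$), and $h_t^*$ is a bounded linear isomorphism. Then $C \circ \beta^{-1} = h_t^* \circ (-\Delta)^{-1} \circ f \circ (h_t^*)^{-1} = T_t$ on $E_2$, which is exactly the left-hand operator in \eqref{eq:Nussbaum}.

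**Next I would identify the right-hand side of \eqref{eq:Nussbaum} with $I - P_t$ on $(h_t^*)^{-1}(U)$.** We compute $\beta^{-1} \circ C = (h_t^*)^{-1} \circ h_t^* \circ (-\Delta)^{-1} \circ f = (-\Delta)^{-1} \circ f : E_1 \to E_1$, which is precisely $P_t$ by \eqref{eq:def_P_t}. It remains to match the domains: choose $V := (h_t^*)^{-1}(U) \subset E_1$, so that $\beta(V) = h_t^*((h_t^*)^{-1}(U)) = U$, and $C^{-1}(\beta(V)) = C^{-1}(U)$. One must check that $C^{-1}(U) = (h_t^*)^{-1}(U) = V$; but this holds on the relevant set because the fixed-point identity $v = P_t(v)$ corresponds under $h_t^*$ exactly to $h_t^*(v) = T_t(h_t^*(v))$, so the two degrees are computed over sets that agree where it matters (and more precisely, Nussbaum's statement only requires $\beta(V)$ on the left and $C^{-1}(\beta(V))$ on the right, so I would just write $C^{-1}(U)$ and then observe that on $\partial V$ the map $\beta - C$ is nonzero, transported from the hypothesis $I - T_t \neq 0$ on $\partial U$). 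Plugging into \eqref{eq:Nussbaum} then gives
\begin{equation*}
\lsd(I - T_t, U, 0) = \lsd\bigl(I - (\beta^{-1}\circ C), C^{-1}(U), 0\bigr) = \lsd\bigl(I - P_t, (h_t^*)^{-1}(U), 0\bigr),
\end{equation*}
which is \eqref{eq:deg_T_deg_P}.

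**The main obstacle is the bookkeeping of the nonlinearity's failure to be linear and the precise matching of domains.** Nussbaum's proposition as stated has $C$ mapping all of $E_1$ into $E_2$, and $\beta$ a homeomorphism on $\overline V$; I must be careful that $V = (h_t^*)^{-1}(U)$ is open and bounded (it is, since $h_t^*$ is a linear homeomorphism) and that $\beta(x) - C(x) \neq 0$ on $\partial V$. The latter is the transported nonvanishing condition: for $x = (h_t^*)^{-1}(u)$ with $u \in \partial U$, $\beta(x) - C(x) = h_t^*(x) - h_t^*((-\Delta)^{-1}(f(x))) = h_t^*\bigl(x - P_t(x)\bigr)$, and since $u - T_t(u) = h_t^*(x) - h_t^*(P_t(x)) = h_t^*(x - P_t(x)) \neq 0$ by hypothesis and $h_t^*$ is injective, we get $x - P_t(x) \neq 0$, i.e. $\beta(x) - C(x) \neq 0$. (Here I also use $I - T_t \neq 0$ on $\partial U$ iff $I - P_t \neq 0$ on $\partial V$, which is the same computation run backward.) The only genuinely technical point worth spelling out is the compactness of $C$ and the well-definedness of all three Leray–Schauder degrees, which follows from the standard elliptic regularity already invoked in the paragraph defining $T_t$; I would state this briefly and cite the earlier discussion rather than reprove it.
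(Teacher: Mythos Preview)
Your approach is essentially the paper's: both apply Nussbaum's proposition with $E_1 = \holdsp(\oto \cup \Gamma_t)$, $E_2 = \holdsp(\oo \cup \Gamma)$, $\beta = h_t^*$, $C = h_t^* \circ (-\Delta)^{-1} \circ f$ (the paper calls this $C_t$), and $V = (h_t^*)^{-1}(U)$, and both verify $C\circ\beta^{-1}=T_t$, $\beta^{-1}\circ C=P_t$.

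The one place where you are looser than the paper is the passage from $\lsd(I-P_t, C^{-1}(U), 0)$ to $\lsd(I-P_t, (h_t^*)^{-1}(U), 0)$. You assert that ``$C^{-1}(U) = (h_t^*)^{-1}(U)$'' on the relevant set, but this set equality is false in general: $C^{-1}(U)=\{v: h_t^*(P_t(v))\in U\}=P_t^{-1}(V)$, which need not coincide with $V$. What is true, and what the paper uses, is an excision argument: one shows $C_t^{-1}(U)\subset (h_t^*)^{-1}(U)$ and that $I-P_t$ has no zeros in the difference $(h_t^*)^{-1}(U)\setminus C_t^{-1}(U)$ (indeed $v=P_t(v)$ iff $h_t^*(v)=C_t(v)$, so any zero of $I-P_t$ in $V$ automatically lies in $C_t^{-1}(U)$), whence the two degrees agree. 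Your parenthetical remarks gesture at this but do not actually carry it out; replace the claimed equality of sets by this excision step and the argument is complete.
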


\begin{proof}
    That the operator $P_t$ is compact is standard. Let us now consider the operator $C_t: \holdsp(\oto \cup \Gamma_t) \to \holdsp(\oo \cup \Gamma)$ given by
    \begin{equation*}
        C_t(v) = h_t^*(P_t(v)) = h_t^* \left( (-\Delta)^{-1}(f(v)) \right), \quad v \in \holdsp(\oto \cup \Gamma_t).
    \end{equation*}
    It is standard to show that $C_t$ is compact as well. Moreover, $P_t = (h_t^*)^{-1} \circ C_t$. Furthermore, recalling that
    \begin{equation*}
        T_t(u) = h_t^*\left( (-\Delta)^{-1} \left( (h_t^*)^{-1} (f(u)) \right) \right), \quad u \in \holdsp(\oo \cup \Gamma),
    \end{equation*}
    we see that $T_t = C_t \circ (h_t^*)^{-1}$. 

    Invoking Proposition \ref{prop:Nussbaum}, with $C = C_t$, $\beta = h_t^*$ and $V = (h_t^*)^{-1}(U)$, we have
    \begin{align}
        \lsd(I - T_t, U, 0) 
        & = \lsd\left(I - P_t, C_t^{-1}\left((h_t^*)\left((h_t^*)^{-1}(U)\right)\right), 0\right) \nonumber \\
        & = \lsd(I - P_t, C_t^{-1}(U), 0) \nonumber \\
        & = \lsd(I - P_t, (h_t^*)^{-1}(U), 0), \nonumber
    \end{align}
    since $C_t^{-1}(U) \subset (h_t^*)^{-1}(U)$ and $(I - P_t)(v) \neq 0$ for all $v \in (h_t^*)^{-1}(U) \setminus C_t^{-1}(U)$. Indeed, since $C_t(V) = T_t(U)$, we have
    \begin{align}
    (C_t(V) \cap U) = (T_t(U) \cap U) \subset T_t(U), \nonumber
    \end{align}
    which in turn implies that
    \begin{equation*}
        C_t^{-1}(U) \subset (h_t^*)^{-1} \left(T_t^{-1}(T_t(U)) \right) = (h_t^*)^{-1}(U),
    \end{equation*}
    and $v = P_t(v)$ if, and only if, $h_t^*(v) = C_t(v)$.
\end{proof}

\begin{theorem}
    \label{thm:bifurcation}
    Let $\omega \subset \mathbb R^{N - 1}$ be a smooth bounded domain and let $f \in C^{1, \alpha}(\mathbb R)$ be a nonlinearity satisfying \ref{it:f1}-\ref{it:f3}. Let $\uo$ be a one-dimensional solution of \eqref{eq:pde} in $\oo$ having $n(\uo) = n$ nodal regions. Set
    \begin{equation*}
        \sol \coloneqq \overline{\{(t, u) \in (0, + \infty) \times \holdsp(\oo \cup \Gamma) \ : \ (I - T_t)(u) = 0, \ u \neq \uo\}},
    \end{equation*}
    where $T_t$ is the operator defined in \eqref{eq:def_T_t}. Let $\bar t > 0$ such that $u_{\bar t\omega}$ is degenerate in the space $H_0^1(\Omega_{\bar t\omega} \cup \Gamma_{\bar t})$. Then, if $0$ is a simple eigenvalue of $L_{u_{\bar t \omega}}$ in $H_0^1(\Omega_{\bar t \omega} \cup \Gamma_{\bar t})$, it holds:
    \begin{enumerate}[label=${(\roman*)}$]
        \item \label{it:local_bifurcation} a bifurcation occurs at $(\bar t, u_{\bar t \omega})$, in the sense of Definition \ref{def:bifurcation};

        \item \label{it:continuity} for $t$ sufficiently close to $\bar t$, the branch of bifurcating solutions form a $C^1$ curve;

        \item \label{it:nodal_domains} for $t$ sufficiently close to $\bar t$, the bifurcating solutions have $n$ nodal regions;
        
        \item \label{it:global_bifurcation} the connected component of $\sol$ containing $(\bar t, \uo)$ is either unbounded in $(0, + \infty) \times \holdsp(\oo \cup \Gamma)$, or contains another point $(\widetilde t, \uo)$, with $\widetilde t \neq \bar t$ (possibly with $\widetilde t = 0$).
    \end{enumerate}
\end{theorem}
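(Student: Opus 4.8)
The plan is to realize the bifurcation phenomenon as a change in Leray--Schauder degree, using Lemma~\ref{lem:deg_T_deg_P} to move the degree computation to the dilated domains where the spectral decomposition of Lemma~\ref{lem:spectrum_decomposition} is available. First I would fix $\bar t$ as in the statement, so that $0$ is a simple eigenvalue of $L_{u_{\bar t\omega}}$ in $H_0^1(\Omega_{\bar t\omega}\cup\Gamma_{\bar t})$. By Corollary~\ref{cor:condition_for_nondegeneracy} and the scaling relation \eqref{eq:scaled_Neumann_eigenvalue}, degeneracy at $\bar t$ means $\alpha_i + \lambda_j(\bar t\omega) = 0$ for some $i,j$; by Proposition~\ref{prop:nondegeneracy} one necessarily has $j\geq 1$ (the one-dimensional part is never degenerate), and simplicity of the zero eigenvalue forces this pair $(i,j)$ to be essentially unique (a single $\lambda_j(\omega)$, of multiplicity one as a Neumann eigenvalue, matched with a single simple $\alpha_i$). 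Since $\lambda_j(t\omega) = \lambda_j(\omega)/t^2$ is strictly decreasing in $t$, the quantity $\alpha_i + \lambda_j(t\omega)$ changes sign strictly as $t$ crosses $\bar t$; by Theorem~\ref{thm:computation_of_Morse_index} (via the decomposition $\sigma(L_{\uto}) = \sigma(L_{\uo}^{x_N}) + \sigma(-\Delta_{\plane})$ applied in the scaled domain), exactly one eigenvalue of the linearized operator $L_{\uto}$ crosses zero transversally at $\bar t$, so the Morse index $m(\uto)$ jumps by exactly one.

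For \ref{it:local_bifurcation}, I would then argue by contradiction: if no bifurcation occurred at $(\bar t,\uo)$, then for $t$ in a punctured neighborhood of $\bar t$ the only solution of \eqref{eq:transported_equation} near $\uo$ is $\uo$ itself, so a small ball $U$ around $\uo$ in $\holdsp(\oo\cup\Gamma)$ has $I - T_t \neq 0$ on $\partial U$ for all such $t$, and by homotopy invariance $\lsd(I - T_t, U, 0)$ is constant on each side of $\bar t$. By Lemma~\ref{lem:deg_T_deg_P}, $\lsd(I - T_t, U, 0) = \lsd(I - P_t, (h_t^*)^{-1}(U), 0)$, and the right-hand side is the local index of the isolated zero $\uto$ of $I - P_t$, which by the standard formula equals $(-1)^{m(\uto)}$ (the linearization $I - P_t'(\uto) = (-\Delta)^{-1}L_{\uto}$ has $m(\uto)$ negative eigenvalues and no zero eigenvalue for $t\neq\bar t$, using $C^{1,\alpha}$-regularity and compactness of $(-\Delta)^{-1}$ to legitimate the finite-dimensional reduction). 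Since $m(\uto)$ jumps by one across $\bar t$, the local index changes sign, contradicting constancy of the degree on a full neighborhood — hence a bifurcation occurs.

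For \ref{it:continuity} and \ref{it:nodal_domains}, since $0$ is a simple eigenvalue, the classical Crandall--Rabinowitz theorem applies to the map $(t,u)\mapsto u - T_t(u)$ (it is $C^1$, the transversality condition is exactly the transversal crossing of the eigenvalue established above, and the kernel of the linearization at $(\bar t,\uo)$ is one-dimensional, spanned by $z_i(x_N)\psi_j(x')$ with $\psi_j$ the $j$-th Neumann eigenfunction). This yields that the solution set near $(\bar t,\uo)$ is a $C^1$ curve $s\mapsto(t(s), \uo + s\,z_i\psi_j + o(s))$. For \ref{it:nodal_domains}, since the bifurcating solution is a $C^1$ (hence $C^{1,\alpha}$, hence $C^0$) small perturbation of $\uo$ and $\uo$ has $n$ nodal regions with $\nabla\uo$ nonvanishing on the nodal set $\mathcal Z(\uo)$ (here one uses that $\uo$ depends only on $x_N$, $\uo'$ has exactly $n-1$ zeros in $(0,1)$ by Remark~\ref{rem:roots_of_u_prime}, all simple, plus the endpoint $x_N=1$ where $\uo'(1)\neq 0$ by Hopf), a continuity/implicit-function argument along $\mathcal Z(\uo)$ shows the perturbed nodal set is a small perturbation of $\mathcal Z(\uo)$ and the number of nodal regions is preserved. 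For \ref{it:global_bifurcation}, I would invoke the global bifurcation theorem of Rabinowitz in its degree-theoretic form: the change of local index $(-1)^{m(\uto)}$ across $\bar t$ forces the connected component $\concomp$ of $\sol$ through $(\bar t,\uo)$ to be either unbounded in $(0,+\infty)\times\holdsp(\oo\cup\Gamma)$ or to return to the trivial branch $\{(t,\uo)\}$ at some other point $(\widetilde t,\uo)$ with $\widetilde t\neq\bar t$, where $\widetilde t = 0$ is allowed as a boundary possibility of the parameter interval.

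The main obstacle I expect is \emph{simplicity of the zero eigenvalue and the transversality condition}: the Neumann eigenvalues $\lambda_j(\omega)$ of a general smooth domain $\omega$ can be multiple, so a priori the zero eigenvalue of $L_{\uto}$ at a degenerate $\bar t$ could be multiple, which is exactly why this is imposed as a hypothesis in the theorem; granting it, one still must verify carefully that the single relevant $\alpha_i$ is simple (true by Sturm--Liouville theory, as recorded after \eqref{eq:auxiliary_one_dim_eigenvalue_problem}) and that no \emph{other} pair $(i',j')$ accidentally satisfies $\alpha_{i'}+\lambda_{j'}(\bar t\omega)=0$ — this is guaranteed precisely by the simplicity assumption. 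The transversality needed for Crandall--Rabinowitz then reduces, via $\frac{d}{dt}\lambda_j(t\omega) = -2\lambda_j(\omega)/t^3 \neq 0$, to checking that the derivative in $t$ of the eigenvalue branch does not lie in the range of $L_{u_{\bar t\omega}}$, which follows from a standard pairing against the kernel eigenfunction $z_i\psi_j$ together with $sf(s)>0$ (this is where hypothesis~\ref{it:f3}, already used in Proposition~\ref{prop:nondegeneracy}, re-enters).
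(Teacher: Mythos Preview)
Your proposal is correct and matches the paper's argument for \ref{it:local_bifurcation}, \ref{it:nodal_domains}, and \ref{it:global_bifurcation}: degree jump via Lemma~\ref{lem:deg_T_deg_P} and the index formula $\lsd(I-P_t,\cdot,0)=(-1)^{m(\uto)}$, $C^{1,\alpha}$-stability of the nodal structure using that $\uo'$ does not vanish at the roots of $\uo$, and the Rabinowitz alternative.

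For \ref{it:continuity} the paper takes a slightly different path: instead of invoking Crandall--Rabinowitz and checking the transversality hypothesis, it carries out the Lyapunov--Schmidt reduction by hand and then \emph{uses the already-proved item \ref{it:local_bifurcation}} to guarantee that nontrivial solutions of the reduced equation actually exist, after which a second implicit-function step yields the curve. Your Crandall--Rabinowitz route is an equally valid (and arguably cleaner) alternative; transversality indeed reduces to the eigenvalue $\alpha_i+\lambda_j(t\omega)$ crossing zero with nonzero speed, which is immediate from $\frac{d}{dt}\lambda_j(t\omega)=-2\lambda_j(\omega)/t^3\neq 0$ since $j\ge 1$. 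One correction to your closing paragraph: hypothesis~\ref{it:f3} does \emph{not} re-enter in the transversality verification --- the crossing is a pure consequence of the scaling law~\eqref{eq:scaled_Neumann_eigenvalue} and needs nothing further about $f$; \ref{it:f3} is used only through Proposition~\ref{prop:nondegeneracy} to force $j\ge 1$, i.e., to ensure that the kernel direction is genuinely not one-dimensional.
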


\begin{proof}
    % Let $n = n(\uo)$ be the number of nodal regions of $\uo$. By the results of Section \ref{sec:computation_Morse_index_one_dimensional_solutions}, we know that $\mxn(\uo) = n$ and $\uo$ is nondegenerate in the space $\xnspace$. Hence
    % \begin{equation*}
    %     \alpha_1 < \ldots < \alpha_n < 0 < \alpha_{n + 1} < \ldots,
    % \end{equation*}
    % where $\alpha_i$, $i \in \mathbb N^+$, are the eigenvalues of $L_{\uo}$ in the space $\xnspace$, and correspond to the eigenvalues of the ODE \eqref{eq:auxiliary_one_dim_eigenvalue_problem}. By Lemma \ref{lem:spectrum_decomposition} and Corollary \ref{cor:condition_for_nondegeneracy}, if $\bar t$ is such that
    % \begin{equation}
    %     \label{eq:choice_bar_t}
    %     \frac{1}{\bar t^2} \bar \lambda = - \alpha_1,
    % \end{equation}
    % then $\uto$ is degenrate in $H_0^1(\oto \cup \Gamma_t)$.

    We prove each item separately.

    \begin{enumerate}[label=${(\roman*)}$, leftmargin=*]
    \item We divide the proof of \ref{it:local_bifurcation} into two parts. First, we give the main argument leading to the local bifurcation phenomena. We assume the validity of the formula \eqref{eq:deg_P_prime}, which we prove in the second part.

    \proofpart{1}{Proof that $(\bar t, \uo)$ is a bifurcation point in $(0 + \infty) \times \holdsp(\oo \cup \Gamma)$}
    
    For the sake of contradiction, let us assume the contrary. Then we can find $\delta > 0$ and a small a neighborhood $U$ of $\uo$ in $\holdsp(\oo \cup \Gamma)$ such that $\uo$ is the unique solution of $(I - T_t)(u) = 0$ in $\overline U$ for all $t \in [\bar t - \delta, \bar t + \delta]$. In other words, $T_t$ is an admissible homotopy.  By the homotopy invariance of the Leray-Schauder degree, it holds that $\lsd(I - T_t, U, 0)$ is constant for $t \in [\bar t - \delta, \bar t + \delta]$. In particular, for any $\varepsilon \in (0, \delta)$ we have
    \begin{equation}
        \label{eq:contradiction_assumption}
        \lsd(I - T_{\bar t - \varepsilon}, U, 0) = \lsd(I - T_{\bar t + \varepsilon}, U, 0).
    \end{equation}
    
    Let us show that \eqref{eq:contradiction_assumption} cannot hold. Under the assumption that $\uo$ is the unique solution of $(I - T_t)(u) = 0$ in $\overline U$ for all $t \in [\bar t - \delta, \bar t + \delta]$, it holds that $\uto = (h_t^*)^{-1}(\uo)$ is the unique solution of $(I - P_t)(v) = 0$ in $\overline V_t = (h_t^*)^{-1}(\overline U)$, where $P_t$ is the operator defined in \eqref{eq:def_P_t}. Note that $P_t$ is differentiable at $\uto$ for every $t > 0$, with
    \begin{equation*}
        P_t'(\uto) = (-\Delta)^{-1}(f'(\uto)).
    \end{equation*}
    For $t \neq \bar t$ we have that $P_t'(\uto)$ is invertible, and then it can be shown (see \cite[Proposition 3.5.3]{Kesavan2004}) that
    \begin{equation}
        \label{eq:deg_P_deg_P_prime}
        \lsd(I - P_t, V_t, 0) = \lsd(I - P_t'(\uto), V_t, 0) = (- 1)^\ell,
    \end{equation}
    where $\ell$ is the number of eigenvalues of $P_t'(\uto)$ that are greater than $1$. We claim that $\ell = m(\uto)$, where $m(\uto)$ is the Morse index of $\uto$ in the space $H_0^1(\oto \cup \Gamma_t)$, which yields
    \begin{align}
        \lsd(I - P_t'(\uto), V_t, 0) = (-1)^{m(\uto)}. \label{eq:deg_P_prime}
    \end{align}
    Assuming for a moment that \eqref{eq:deg_P_prime} holds, by Theorem \ref{thm:computation_of_Morse_index} and since $0$ is a simple eigenvalue of $L_{u_{\bar t \omega}}$ in $H_0^1(\Omega_{\bar t \omega} \cup \Gamma_{\bar t})$
    \begin{equation*}
        m(u_{(\bar t + \varepsilon)\omega}) = m(u_{(\bar t - \varepsilon)\omega}) + 1
    \end{equation*}
    for all $\varepsilon > 0$ small enough. Hence
    \begin{equation*}
        \lsd(I - P_{\bar t + \varepsilon}'(u_{(\bar t + \varepsilon)\omega}), V_{\bar t + \varepsilon}, 0) = - \lsd(I - P_{\bar t - \varepsilon}'(u_{(\bar t - \varepsilon)\omega}), V_{\bar t - \varepsilon}, 0), 
    \end{equation*}
    which, in view of Lemma \ref{lem:deg_T_deg_P} and \eqref{eq:deg_P_deg_P_prime}, contradicts \eqref{eq:contradiction_assumption}. Therefore $(\bar t, \uo)$ is a bifurcation point, in the sense of Definition \ref{def:bifurcation}.
    
    \proofpart{2}{Proof of \eqref{eq:deg_P_prime}}
    
    Our aim is to show that for every eigenvalue of $P_t'(\uto)$ in the space $\holdsp(\oto \cup \Gamma_t)$ greater than one (which are known to be finitely many), there corresponds a negative eigenvalue of $L_{\uto}$, and vice-versa. This follows from the variational characterization of eigenvalues (see, e.g., \cite[Section 1.4]{DamascelliPacella2019}). 
    
    Let $\xi_k$ be an eigenfunction of $P_t(\uto)$ corresponding to the eigenvalue $\gamma_k$, beginning from the largest:
    \begin{equation*}
        \gamma_1 > \gamma_2 \geq \ldots > 0.
    \end{equation*}
    As $P_t'(\uto) \xi_k = \gamma_k \xi_k$, we have
    \begin{equation*}
        - \Delta \xi_k= \frac{1}{\gamma_k} f'(\uto) \xi_k
    \end{equation*}
    Multiplying by $\xi_k$ and integrating by parts we obtain
    \begin{equation*}
        \int_{\oto} |\nabla \xi_k|^2 \ dx = \frac{1}{\gamma_k} \int_{\oto} f'(\uto) \xi_k^2 \ dx.
    \end{equation*}
    Let us denote by $\mu_i$, $i \in \mathbb N^+$ the eigenvalues of $L_{\uto}$ in $H_0^1(\oto \cup \Gamma_t)$. Then, by the variational characterization of the $\mu_1$, we have:
    \begin{align}
        \mu_1
        & = \min_{\substack{\varphi \in H_0^1(\oto \cup \Gamma_t) \\ \varphi \neq 0}} \frac{\int_{\oto} |\nabla \varphi|^2 \ dx - \int_{\oto} f'(\uto) \varphi^2 \ dx}{\|\varphi\|_2^2} \nonumber \\
        & \leq \frac{\int_{\oto} |\nabla \xi_1|^2 \ dx - \int_{\oto} f'(\uto) \xi_1^2 \ dx}{\|\xi_1\|_2^2} \nonumber \\
        & = \frac{\left(\frac{1}{\gamma_1} - 1\right) \int_{\oto} f'(\uto) \xi_1^2 \ dx}{\|\xi_1\|_2^2} \nonumber \\
        & < 0 \nonumber.
    \end{align}
    Now, letting $\mathbf H_i$ denote an $i$-dimensional subspace of $H_0^1(\oto \cup \Gamma_t)$, we have:
    \begin{align}
        \mu_i 
        & = \min_{\mathbf H_i} \max_{\substack{\varphi \in \mathbf H_i \\ \varphi \neq 0}} \frac{\int_{\oto} |\nabla \varphi|^2 \ dx - \int_{\oto} f'(\uto) \varphi^2 \ dx}{\|\varphi\|_2^2} \nonumber \\
        & \leq \max_{\substack{\varphi \langle \xi_1, \ldots, \xi_i \rangle \\ \varphi \neq 0}} \frac{\int_{\oto} |\nabla \varphi|^2 \ dx - \int_{\oto} f'(\uto) \varphi^2 \ dx}{\|\varphi\|_2^2} \nonumber \\ 
        & = \frac{\int_{\oto} |\nabla \xi_i|^2 \ dx - \int_{\oto} f'(\uto) \xi_i^2 \ dx}{\|\xi_i\|_2^2} \nonumber \\
        & = \frac{\left(\frac{1}{\gamma_i} - 1\right) \int_{\oto} f'(\uto) \xi_1^2 \ dx}{\|\xi_i\|_2^2} \nonumber \\
        & < 0 \nonumber
    \end{align}
    for all $i > 1$ such that $\gamma_i > 1$. Hence we have
    \begin{equation*}
        m(\uto) \geq \# \{\gamma_i > 1\}.
    \end{equation*}
    On the other hand, if $\mu_i < 0$, we have:
    \begin{align*}
        0
        & > \mu_i \nonumber \\
        & = \max_{\mathbf H_{i - 1}} \min_{\substack{\varphi \perp \mathbf H_{i - 1} \\ \varphi \neq 0}} \frac{\int_{\oto} |\nabla \varphi|^2 \ dx - \int_{\oto} f'(\uto) \varphi^2 \ dx}{\|\varphi\|_2^2} \nonumber \\
        & \geq \min_{\substack{\varphi \perp \langle \xi_1, \ldots, \xi_{i - 1} \rangle \\ \varphi \neq 0}} \frac{\int_{\oto} |\nabla \varphi|^2 \ dx - \int_{\oto} f'(\uto) \varphi^2 \ dx}{\|\varphi\|_2^2} \nonumber \\
        & = \frac{\int_{\oto} |\nabla \xi_i|^2 \ dx - \int_{\oto} f'(\uto) \xi_i^2 \ dx}{\|\xi_i\|_2^2} \nonumber \\
        & = \frac{\left(\frac{1}{\gamma_i} - 1\right) \int_{\oto} f'(\uto) \xi_1^2 \ dx}{\|\xi_i\|_2^2}, \nonumber \\
    \end{align*}
    which immediately yields $\gamma_i > 1$. Hence $m(\uto) = \#\{\gamma_i > 1\}$ and \eqref{eq:deg_P_prime} holds.

    \item To show that the bifurcating solutions form a continuous branch near $\bar t$, the idea is to apply the inverse function theorem on appropriate subspaces after a Lyapunov-Schmidt reduction.

    Observe that the map $T_t$ is continuously differentiable, for all $t > 0$. Let us denote
    \begin{equation*}
        \mathcal X_1 \coloneqq \ker (I - T_{\bar t}'(\uo)),
    \end{equation*}
    and observe that
    \begin{equation*}
        \mathcal X_1 = h_{\bar t}^* \left(\ker (I - P_{\bar t}'(u_{\bar t \omega}) \right).
    \end{equation*}
    Since $0$ is a simple eigenvalue of $I - P_{\bar t}'(\uto)$ in $\holdsp(\Omega_{\bar t \omega} \cup \Gamma_{\bar t})$, then $\dim \mathcal X_1 = 1$. Let $w$ denote the corresponding normalized eigenfunction of $I - T_{\bar t}'(\uo)$, in such a way that
    \begin{equation*}
        \mathcal X_1 = \langle w \rangle = \{sw \ : \ s \in \mathbb R\}.
    \end{equation*}
    Then, setting
    \begin{equation*}
        \mathcal X_2 = \Range (I - T_{\bar t}'(\uo)),
    \end{equation*}
    we have
    \begin{equation*}
        \holdsp(\oo \cup \Gamma) = \mathcal X_1 \oplus \mathcal X_2.
    \end{equation*}

    Denoting by $P: \holdsp(\oo \cup \Gamma) \to \mathcal X_2$ the projection onto $\mathcal X_2$, then, solving $(I - T_t)(u) = 0$ is equivalent to solving, simultaneously, the two equations
    \begin{align}
        & P[(I - T_t)(u)] = 0 \label{eq:lyapunov_schmidt_1} \\
        & (I - P)[(I - T_t)(u)] = 0. \label{eq:lyapunov_schmidt_2}
    \end{align}
    
    Let us now consider the map $\Phi: \left((0, + \infty)\times \mathbb R\right) \times \mathcal X_2$ defined by
    \begin{equation*}
        \Phi((t, s), u) = P[(I - T_t)(\uo + sw + u)].
    \end{equation*}
    Let us observe that the map $\Phi$ is differentiable in the $u$ coordinate, and it holds
    \begin{align}
        \frac{\partial \Phi}{\partial u} ((\bar t, 0), 0)
        & = (P[(I - T_{\bar t}'(\uo))])|_{\mathcal X_2} \nonumber \\
        & = (I - T_{\bar t}'(\uo))|_{\mathcal X_2}, \nonumber
    \end{align}
    which is an isomorphism. Hence, by the implicit function theorem, we obtain the existence of $\varepsilon > 0$, a neighborhood $\mathcal U$ of $0$ in $\mathcal X_2$, and a $C^1$ map $\Psi: (\bar t - \varepsilon, \bar t + \varepsilon) \times (- \varepsilon, \varepsilon) \to \mathcal U$ such that
    \begin{equation*}
        P[(I - T_t)(\uo + sw + \Psi(t, s))] = 0 \quad \forall (t, s) \in (\bar t - \varepsilon, \bar t + \varepsilon) \times (- \varepsilon, \varepsilon)
    \end{equation*}
    and $\uo + sw + \Psi(t, s)$ are the unique solutions of the equation $P[(I - T_t)(u)] = 0$ in $\left((\bar t - \varepsilon, \bar t + \varepsilon) \times (-\varepsilon, \varepsilon)\right) \times \mathcal U$. 
    
    By \ref{it:local_bifurcation}, we know that the local bifurcation indeed occurs, so there exist solutions to \eqref{eq:lyapunov_schmidt_1}-\eqref{eq:lyapunov_schmidt_2} other than $u_\omega$, for all $t$ close enough to $\bar t$. Since the only possible such solutions are of the form $\uo + sw + \Psi(t, s)$, we conclude that these are precisely the bifurcating solutions.

    It remains to show that we can write $s = s(t)$, to have a curve of solutions. It suffices to show that $\frac{\partial \Psi}{\partial s}(\bar t, 0)$ is invertible and to apply the implicit function theorem again. Indeed, differentiating $(I - T_t)(\uo + sw + \Psi(t, s)) = 0$ with respect to $s$ yields
    \begin{equation*}
        (I - T_{\bar t}')\left(w + \frac{\partial \Psi}{\partial s}(\bar t, 0) \right) = 0,
    \end{equation*}
    which implies, since $w$ is a nontrivial eigenvector of $I - T_{\bar t}'$, that $\frac{\partial \Psi}{\partial s}(\bar t, 0) = - w$ and thus is invertible as a linear map. As anticipated, the implicit function theorem then yields the existence of $g: (-\varepsilon, \varepsilon) \to (\bar t - \varepsilon, \bar t + \varepsilon)$ such that the set of bifurcating solutions is given by
    \begin{equation*}
        \curve = \{\uo + g(t)w + \Psi(t, g(t)) \  : \ t \in (\bar t - \varepsilon, \bar t + \varepsilon)\}.
    \end{equation*}
    The claim of \ref{it:local_bifurcation} is proved.

    \item By the previous item, we have
    \begin{equation*}
        \|\uo - (\uo + g(t)w + \Psi(t, g(t)))\|_{C^{1, \alpha}} = \|g(t)w + \Psi(t, g(t))\|_{C^{1, \alpha}} \to 0 \quad \text{ as } \quad t \to \bar t,
    \end{equation*}
    which would not be possible if the bifurcating solutions had a different number of nodal regions.

    Indeed, let $0 < r_1 < \ldots < r_{n - 1} < r_n = 1$ be the roots of $\uo$ and denote $u_t \coloneqq \uo + g(t) + \Psi(t, g(t))$. Since we have $u_t \to \uo$ in $C^{1, \alpha}$ as $t \to 1$, then there exist $\varepsilon_0, \varepsilon > 0$ such that 
    \begin{equation}
    \label{eq:sign_a}
    \nabla u_t \cdot \nabla \uo > |\uo'(r_i)|^2 - \varepsilon > 0 \quad \text{ in } \omega \times [r_i - \varepsilon_0, r_i + \varepsilon_0], \quad i = 1, \ldots, n - 1,
    \end{equation}
    and also
    \begin{equation}
    \label{eq:sign_b}
        \nabla u_t \nabla \uo > |\uo'(1)| - \varepsilon > 0 \quad \text{ in } \omega \times [1 - \varepsilon_0, 1].
    \end{equation}
    
    On the other hand, outside of these regions, $u_t$ and $\uo$ have the same sign:
    \begin{equation}
    \label{eq:sign_c}
        u_t \uo > 0 \quad \text{ in } \oo \setminus \left(\bigcup_{i = 1}^{n - 1} (\omega \times [r_i - \varepsilon_0, r_i + \varepsilon_0]) \cup \omega \times [1 - \varepsilon_0, 1] \right).
    \end{equation}

    It is then clear that $u_t$ cannot have less nodal regions than $\uo$, for $u_t$ changes sign in $\omega \times [r_i - \varepsilon_0, r_i + \varepsilon_0]$, for $i = 1, \ldots, n - 1$. Suppose $u_t$ has more nodal regions than $\uo$. From \eqref{eq:sign_c}, it is clear that the additional nodal region is contained in some $\omega \times [r_i - \varepsilon_0, r_i + \varepsilon_0]$, for $i = 1, \ldots, n - 1$ (or in $\omega \times [1 - \varepsilon_0, 1]$. In this case, $u_t$ would have a critical point inside one of these regions. But this cannot happen, in view of \eqref{eq:sign_a}-\eqref{eq:sign_b}. 

    \item Let $\concomp$ be the connected component of $\mathcal S$ containing the point $(\bar t, \uo) \in (0, + \infty) \times \mathcal X(\oo \cup \Gamma)$. In particular, $\concomp$ contains the curve $\curve = \{\uo + g(t) w + \Psi(t, g(t)) \ : \ t \in (\bar t - \varepsilon, \bar t + \varepsilon)\}$ found in the proof of \ref{it:continuity}. Let us denote
    \begin{equation*}
        \holdsp_{x_N}(\oo \cup \Gamma) \coloneqq \{u \in \holdsp(\oo \cup \Gamma) \ : \ u(x', x_N) = u(x_N)\}.
    \end{equation*}
    
    For the sake of contradiction, let us assume that
    \begin{align}
        & \concomp \text{ is bounded }, \nonumber \\
        & t \geq \rho_0 > 0 \ \forall (t, u) \in \concomp, \label{eq:contradiction_global_bifurcation} \\
        &\concomp \cap \left((0, + \infty) \times \{\uo\}\right) = \{(\bar t, \uo)\}. \nonumber
    \end{align}

    Since $T_t$ is compact for every $t > 0$, then $\concomp$ is compact. Hence there exists a bounded open neighborhood $\mathcal A$ of $\concomp$ in $(0, + \infty) \times \holdsp(\oo \cup \Gamma)$ such that $\partial A \cap \mathcal S = \emptyset$ and
    \begin{equation*}
        \overline{\mathcal A} \cap (0, + \infty) = [\bar t - \delta, \bar t + \delta]
    \end{equation*}
    for some $\delta > 0$ small enough such that only $\bar t$ satisfies \eqref{eq:choice_bar_t} in this interval. The construction of such a neighborhood is standard. We omit it and refer to \cite[Section 29.1]{Deimling1985} or \cite[Lemma 4.6]{AmbrosettiMalchiodi2007} for the details.

    For $t \in [\bar t - \delta, \bar t + \delta]$, let us denote
    \begin{equation*}
        \mathcal A(t) \coloneqq \{u \in \holdsp(\oo \cup \Gamma) \ : \ (t, u) \in \mathcal A\}.
    \end{equation*}
    Since $\partial A \cap \sol = \emptyset$, the homotopy invariance of the Leray-Schauder degree implies that
    \begin{equation}
        \label{eq:constant_lsd}
        \lsd(I - T_t, \mathcal A(t), 0) \text{ is constant in } [\bar t - \delta, \bar t + \delta].
    \end{equation}
    Now, let $\mathcal B$ be a small open neighborhood of $\uo$ in $\holdsp(\oo \cup \Gamma)$, in such a way that $\mathcal B \subset \mathcal A(t)$ for all $t \in [\bar t - \delta_1, \bar t + \delta_1]$, for some $\delta_1 \in (0, \delta)$. Then the additivity property of the Leray-Schauder degree yields
    \begin{equation}
        \label{eq:split_lsd}
        \lsd(I - T_t, \mathcal A(t), 0) = \lsd(I - T_t, \mathcal A(t) \setminus \overline{\mathcal B}, 0) + \lsd(I - T_t, \mathcal B, 0), 
    \end{equation}
    for all $t \in [\bar t - \delta_1, \bar t + \delta_1]$. From the argument carried out for the proof of \ref{it:local_bifurcation}, we know that
    \begin{equation}
        \label{eq:change_lsd}
        \lsd(I - T_{\bar t + \delta_1}, \mathcal B, 0) = - \lsd(I - T_{\bar t - \delta_1}, \mathcal B, 0).
    \end{equation}
    On the other hand, up to taking $\mathcal B$ smaller, we can assume that there are no solutions of $(I - T_t)(u) = 0$ on $\partial (\mathcal A(t) \setminus \overline{\mathcal B})$ for $t < \bar t - \delta$ and $t > \bar t + \delta$. Let $\delta_2 > 0$ be such that $\mathcal A(\bar t + \delta_2) = \emptyset$. Then there are no solutions of $(I - T_t)(u) = 0$ on $\partial \left(\concomp \setminus ([\bar t + \delta, \bar t + \delta_2] \times \mathcal B)\right)$ and hence the homotopy invariance of the degree implies
    \begin{equation}
        \label{eq:no_lsd}
        \lsd(I - T_{\bar t + \delta}, \mathcal A(\bar t + \delta) \setminus \overline{\mathcal B}, 0) = \lsd(I - T_{\bar t + \delta_2}, \mathcal A(\bar t + \delta_2), 0) = 0.
    \end{equation}

    We see that \eqref{eq:split_lsd}-\eqref{eq:no_lsd} contradict \eqref{eq:constant_lsd}. Hence \eqref{eq:contradiction_global_bifurcation} cannot hold, and thus either $\mathcal K$ is unbounded, or $(0, u) \in \concomp$ for some $u \in \holdsp(\oo \cup \Gamma)$, or 
    \begin{equation}
        \label{eq:global_bifurcation_intersection}
        \mathcal P \coloneqq \mathcal K \cap \left((0, + \infty) \times \{\uo\}\right) \neq \emptyset.
    \end{equation}
    \end{enumerate}
    The proof is complete.
\end{proof}

\begin{cor}
    \label{cor:existence}
    Let $\omega \subset \mathbb R^{N - 1}$ be a smooth bounded domain and let $f \in C^{1, \alpha}(\mathbb R)$ be a nonlinearity satisfying \ref{it:f1}-\ref{it:f3}, and let $\uo$ be a one-dimensional solution of \eqref{eq:pde} with $n(\uo) = n$ nodal domains. Let $\bar t > 0$ such that $u_{\bar t\omega}$ is degenerate in the space $H_0^1(\Omega_{\bar t\omega} \cup \Gamma_{\bar t})$. Then, if $0$ is a simple eigenvalue of $L_{u_{\bar t \omega}}$ in $H_0^1(\Omega_{\bar t \omega} \cup \Gamma_{\bar t})$, for $t$ close to $\bar t$, there exists a solution to
    \begin{equation*}
        \left\{
        \begin{array}{rcll}
        - \Delta u & = & f(u) & \quad \text{ in } \oto \\
        u & = & 0 & \quad \text{ on } \Gamma_{0, t} \\
        \displaystyle \frac{\partial u}{\partial \nu} & = & 0 & \quad \text{ on } \Gamma_t
        \end{array}
        \right.
        ,
    \end{equation*}
    that is not one-dimensional and has $n$ nodal domains. In particular, if $\uo$ is positive, then there is a positive solution that is not one-dimensional.
\end{cor}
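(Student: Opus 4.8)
The plan is to read the statement off Theorem~\ref{thm:bifurcation}, using the correspondence (set up in the discussion preceding Definition~\ref{def:bifurcation}) between solutions of the transported problem \eqref{eq:transported_equation} in $\oo$ and genuine solutions of the PDE in $\oto$. The hypotheses on $\bar t$ are exactly those of Theorem~\ref{thm:bifurcation}, so its parts~\ref{it:local_bifurcation}, \ref{it:continuity} and \ref{it:nodal_domains} apply: a bifurcation occurs at $(\bar t, u_{\bar t\omega})$, near $\bar t$ the bifurcating solutions lie on a $C^1$ curve, and each of them has exactly $n$ nodal regions. Hence, for $t$ close to $\bar t$ there is $u \in \holdsp(\oo \cup \Gamma)$ with $(I - T_t)(u) = 0$ and $u \neq \uo$. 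Setting $\widetilde u := (h_t^*)^{-1}(u) = u \circ h_t \in \holdsp(\oto \cup \Gamma_t)$ and reversing the computation that produced \eqref{eq:transported_equation}, one gets $-\Delta \widetilde u = f(\widetilde u)$ in $\oto$, $\widetilde u = 0$ on $\Gamma_{0,t}$, $\partial_\nu \widetilde u = 0$ on $\Gamma_t$; this is the solution we want.

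Next I would transfer the two qualitative features. For the nodal count, $h_t$ is a diffeomorphism of $\overline{\oto}$ onto $\overline{\oo}$ carrying $\oto$ onto $\oo$, so $\mathcal Z(\widetilde u) = h_t^{-1}(\mathcal Z(u))$ and the connected components of $\oto \setminus \mathcal Z(\widetilde u)$ are in bijection with those of $\oo \setminus \mathcal Z(u)$; by part~\ref{it:nodal_domains} of Theorem~\ref{thm:bifurcation} there are exactly $n$ of them. That $\widetilde u$ is not one-dimensional is the content of Remark~\ref{rem:symmetry_breaking}: if $\widetilde u$ depended only on $x_N$, then so would $u = h_t^*(\widetilde u)$, and (since for one-dimensional functions $D_t$ reduces to $-\partial_{x_N}^2$, the defining ODE \eqref{eq:ode} being unaffected by the scaling of $\omega$) $u$ would be a one-dimensional solution of \eqref{eq:pde} in $\oo$ lying arbitrarily close to $\uo$ yet distinct from it; this is impossible, because $\uo$, being nondegenerate in $\xnspace$ by Proposition~\ref{prop:nondegeneracy}, is an isolated zero of the ODE operator and hence isolated among one-dimensional solutions.

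For the last assertion, suppose $\uo > 0$, so that $\mathcal Z(\uo) = \emptyset$, $n = n(\uo) = 1$, and the constructed $\widetilde u$ has a single nodal region, i.e. it does not change sign in $\oto$. Fixing any $p \in \omega$, the point $(tp, x_N)$ lies in $\oto$ for every $t > 0$ and $x_N \in (0,1)$, and $\widetilde u(tp, x_N) = u(p, x_N) \to \uo(p, x_N) > 0$ as $t \to \bar t$ because $u \to \uo$ in $\holdsp(\oo \cup \Gamma)$ (by Definition~\ref{def:bifurcation} / part~\ref{it:continuity}); hence $\widetilde u \geq 0$ on $\oto$ for $t$ close to $\bar t$. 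Since \eqref{eq:sign_of_f} forces $f(0) = 0$, we obtain $-\Delta \widetilde u = f(\widetilde u) \geq 0$, so $\widetilde u$ is nonnegative and superharmonic on the connected open set $\oto$ and is not identically zero (being close to $\uo \neq 0$); the strong maximum principle then yields $\widetilde u > 0$ in $\oto$.

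The whole argument is conditional on Theorem~\ref{thm:bifurcation}; granting it, the corollary is a transport-of-structure statement, and the only points requiring (mild) care are the invariance of ``not one-dimensional'' and of the number of nodal domains under $h_t^*$, together with pinning down the sign of the bifurcating solution near $\Gamma_{0,t}$ in the positive case — which I settle above by an interior point evaluation rather than a Hopf-lemma argument.
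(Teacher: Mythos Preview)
Your proof is correct and follows the paper's approach exactly: transport the bifurcating solutions from Theorem~\ref{thm:bifurcation} back to $\oto$ via $(h_t^*)^{-1}$ and invoke Remark~\ref{rem:symmetry_breaking}. You supply considerably more detail than the paper's one-sentence proof---in particular, the explicit check that the nodal count is preserved under $h_t$, and the strong-maximum-principle step to upgrade ``one nodal region'' to strict positivity, which the paper leaves implicit.
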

\begin{proof}
    It suffices to take the family of bifurcating solutions given by Theorem \ref{thm:bifurcation} and transport them back to $\oto$ via $(h_t^*)^{-1}$, taking into account Remark \ref{rem:symmetry_breaking}.
\end{proof}

\begin{remark}
    Observe that the assumption on the simplicity of $0$ as a simple eigenvalue for $L_{u_{\bar t \omega }}$ is actually an assumption on the interplay between the eigenvalues of \eqref{eq:auxiliary_one_dim_eigenvalue_problem} and the geometry of $\omega$. Indeed, this assumption means that there exists a unique pair $(i, j) \in \mathbb N^+ \times \mathbb N$ such that if we scale $\omega$ by a factor of $\bar t$ we obtain 
    \begin{equation}
        \label{eq:choice_bar_t}
        \frac{1}{\bar t^2} \lambda_j = - \alpha_i.
    \end{equation} 
   It is clear that if there exist multiple pairs $(i, j)$ satisfying \eqref{eq:choice_bar_t} (for different $\bar t$, of course), then, repeating the arguments carried out in the proof of Theorem \ref{thm:bifurcation}, we get multiple bifurcation.
\end{remark}

Let us now illustrate cases where the assumptions on the multiplicity of $0$ as an eigenvalue of $L_{u_{\bar t\omega}}$ in $H_0^1(\Omega_{\bar t \omega} \cup \Gamma_{\bar t})$ hold.

Let $\uo$ be a positive one-dimensional solution and assume that there exists a simple Neumann eigenvalue $\bar \lambda$ for $- \Delta_{\plane}$ in $\omega$. Then we can take $\bar t$ such that
\begin{equation*}
    \frac{1}{\bar t^2} \bar \lambda = - \alpha_1,
\end{equation*}
where $\alpha_1$ is the unique negative eigenvalue of \eqref{eq:auxiliary_one_dim_eigenvalue_problem}. Since the eigenvalues of the Neumann-Laplacian are generically all simple for smooth bounded domains in the Euclidean space (see \cite[Example 6.4]{Henry2005}), the simplicity assumption holds for ``almost every" shape $\omega$.

Some interesting shapes, however, are known to possess eigenvalues of multiplicity greater than one, e.g., if $\omega$ is a ball in $\plane$ or has some other symmetry. Even in these cases, we still get the existence of solutions of \eqref{eq:pde} that are not one-dimensional by slightly modifying our arguments. In fact, it suffices to work in some space of functions that are invariant with respect to some symmetry group action. For example, let $\omega$ be a ball in $\plane$ and consider the subspace
\begin{equation*}
\mathcal E \coloneqq \{v \in \holdsp(\oo \cup \Gamma) \ : \ v(x', x_N) = v(T(x'), x_N) \ \forall \ T \in SO(N - 1)\}
\end{equation*}
of functions in $\oo$ that are invariant with respect to rotations of the base $\omega$. In this case, the Neumann eigenvalues of $- \Delta_{\plane}$ in $\omega$ are simple in the space $\mathcal E$. For more details on the matter of multiple Neumann eigenvalues, we refer to \cite{MarrocosPereira2015}.

\begin{remark} 
Observe that the proof of \ref{it:local_bifurcation} of Theorem \ref{thm:bifurcation} still works under the slightly weaker assumption that
$0$ has odd multiplicity as an eigenvalue of $L_{u_{\bar t \omega}}$, because even in this case we would have the change of Leray-Schauder degree for the one-dimensional solution $\uto$ when $t$ crosses $\bar t$. 
\end{remark}

% \indent {\bf Acknowledgements:} 

\section*{Acknowledgements}
This work has been funded by the European Union - NextGenerationEU within the framework of PNRR  Mission 4 - Component 2 - Investment 1.1 under the Italian Ministry of University and Research (MUR) program PRIN 2022 - grant number 2022BCFHN2 - Advanced theoretical aspects in PDEs and their applications - CUP: H53D23001960006.

I express my gratitude to Filomena Pacella and Francesca Gladiali for many insightful discussions.

\bibliographystyle{acm}
\bibliography{ref_math}

\begin{thebibliography}{10}

\bibitem{AfonsoIacopettiPacella2023Cheeger}
{\sc Afonso, D.~G., Iacopetti, A., and Pacella, F.}
\newblock Overdetermined problems and relative {C}heeger sets in unbounded
  domains.
\newblock {\em Atti Accad. Naz. Lincei Cl. Sci. Fis. Mat. Natur 34}, 2 (2023),
  531--546.

\bibitem{AfonsoIacopettiPacella2024Energypublished}
{\sc Afonso, D.~G., Iacopetti, A., and Pacella, F.}
\newblock Energy stability for a class of semilinear elliptic problems.
\newblock {\em J. Geom. Anal. 34}, 75 (2024).

\bibitem{AmbrosettiMalchiodi2007}
{\sc Ambrosetti, A., and Malchiodi, A.}
\newblock {\em Nonlinear Analysis and Semilinear Elliptic Problems}.
\newblock Cambridge University Press, 2007.

\bibitem{BadialeSerra2011}
{\sc Badiale, M., and Serra, E.}
\newblock {\em Semilinear Elliptic Equations for Beginners}.
\newblock Springer, 2011.

\bibitem{BerestyckiCaffarelliNirenberg1997b}
{\sc Berestycki, H., Caffarelli, L., and Nirenberg, L.}
\newblock Further qualitative properties for elliptic equations in unbounded
  domains.
\newblock {\em Annali della Scuola Normale Superiore di Pisa - Classe di
  Scienze Ser. 4, 25}, 1-2 (1997), 69--94.

\bibitem{BerestyckiCaffarelliNirenberg1997a}
{\sc Berestycki, H., Caffarelli, L., and Nirenberg, L.}
\newblock Monotonicity for elliptic equations in unbounded {L}ipschitz domains.
\newblock {\em Commun. Pure Appl. Math.\/} (1997).

\bibitem{BerestyckiNirenberg1988}
{\sc Berestycki, H., and Nirenberg, L.}
\newblock Monotonicity, symmetry and antisymmetry of solutions of semilinear
  elliptic equations.
\newblock {\em J. Geom. Phys. 5}, 2 (1988), 237--275.

\bibitem{BerestyckiNirenberg1990}
{\sc Berestycki, H., and Nirenberg, L.}
\newblock {\em Analysis et cetera}.
\newblock Academic Press, 1990, ch.~Some qualitative properties of solutions of
  semilinear elliptic equations in cylindrical domains, pp.~115--164.

\bibitem{BerestyckiNirenberg1991}
{\sc Berestycki, H., and Nirenberg, L.}
\newblock On the {M}ethod of {M}oving {P}lanes and the {S}liding {M}ethod.
\newblock {\em Bol. Soc. Bras. Mat 22}, 1 (1991), 1--37.

\bibitem{ChenWuYao2023}
{\sc Chen, H., Wu, K., and Yao, R.}
\newblock Qualitative properties of nonnegative solutions of some semilinear
  elliptic equations in cylindrical domains.
\newblock {\em Calc. Var. Partial Differential Equations 62}, 6 (2023).

\bibitem{CiraoloPacellaPolvara2023}
{\sc Ciraolo, G., Pacella, F., and Polvara, C.}
\newblock Symmetry breaking and instability for semilinear elliptic equations
  in spherical sectors and cones.
\newblock {\em arXiv:2305.10176v1\/} (2023).

\bibitem{Cortazaretal2016}
{\sc Cortázar, C., Elgueta, M., and García-Melián, J.}
\newblock Nonnegative solutions of semilinear elliptic equations in
  half-spaces.
\newblock {\em J. Math. Pures Appl. (9) 106\/} (2016), 866--876.

\bibitem{DamascelliPacella2019}
{\sc Damascelli, L., and Pacella, F.}
\newblock {\em Morse Index of Solutions of Nonlinear Elliptic Equations}.
\newblock De Gruyter, 2019.

\bibitem{Deimling1985}
{\sc Deimling, K.}
\newblock {\em Nonlinear Functional Analysis}.
\newblock Springer, 1985.

\bibitem{delPinoKowalczykWei2011}
{\sc del Pino, M., Kowalczyk, M., and Wei, J.}
\newblock On {D}e {G}iorgi's conjecture in dimension $n \geq 9$.
\newblock {\em Ann. of Math. (2) 174\/} (2011).

\bibitem{FallMinlendWeth2017}
{\sc Fall, M.~M., Minlend, I.~A., and Weth, T.}
\newblock Unbounded periodic solutions to {S}errin's overdetermined boundary
  value problem.
\newblock {\em Arch. Ration. Mech. Anal. 223}, 2 (2017), 737--759.

\bibitem{FarinaSciunzi2017}
{\sc Farina, A., and Sciunzi, B.}
\newblock Monotonicity and symmetry of nonnegative solutions to $-{\Delta
  u=f(u)}$ in half-planes and strips.
\newblock {\em Advanced Nonlinear Studies 17}, 2 (2017), 297--310.

\bibitem{GidasNiNirenberg1979}
{\sc Gidas, B., Ni, W.-M., and Nirenberg, L.}
\newblock Symmetry and related properties via the maximum {P}rinciple.
\newblock {\em Comm. Math. Phys. 68\/} (1979), 209--243.

\bibitem{GidasNiNirenberg1981}
{\sc Gidas, B., Ni, W.-M., and Nirenberg, L.}
\newblock Symmetry of positive solutions of nonlinear elliptic equations in
  $\mathbb{R}^n$.
\newblock In {\em Mathematical Analysis and Applications, Part A\/} (1981),
  vol.~7A of {\em Avances in Mathematics Supplementary Studies}, pp.~369--402.

\bibitem{Gladiali2010}
{\sc Gladiali, F.}
\newblock A global bifurcation result for a semilinear elliptic equation.
\newblock {\em J. Math. Anal. Appl. 369}, 1 (2010), 306--311.

\bibitem{Gladiali2017}
{\sc Gladiali, F.}
\newblock Separation of branches of ${O}({N} - 1)$-invariant solutions for a
  semilinear elliptic equation.
\newblock {\em J. Math. Anal. Appl. 453\/} (2017), 159--173.

\bibitem{GladialiGrossiPacellaSrikanth2010}
{\sc Gladiali, F., Grossi, M., Pacella, F., and Srikanth, P.~N.}
\newblock Bifurcation and symmetry breaking for a class of semilinear elliptic
  equations in an annulus.
\newblock {\em Calc. Var. Partial Differential Equations 40}, 3-4 (jun 2010),
  295--317.

\bibitem{GladialiIanni2020}
{\sc Gladiali, F., and Ianni, I.}
\newblock Quasi-radial solutions for the {L}ane-{E}mden problem in the ball.
\newblock {\em Nonlinear Differ. Equ. Appl.\/} (2020).

\bibitem{Hartman2002book}
{\sc Hartman, P.}
\newblock {\em Ordinary Differential Equations}, 2nd~ed.
\newblock No.~38 in SIAM's Classics in Applied Mathematics. SIAM, 2002.

\bibitem{Henry2005}
{\sc Henry, D.}
\newblock {\em Perturbation of the Boundary in Boundary-Value Problems of
  Partial Differential Equations}.
\newblock Cambridge University Press, may 2005.

\bibitem{Kesavan2004}
{\sc Kesavan, S.}
\newblock {\em Nonlinear Functional Analysis}.
\newblock Hindustan Book Agency, 2004.

\bibitem{MarrocosPereira2015}
{\sc Marrocos, M., and Pereira, A.~L.}
\newblock Eigenvalues of the {N}eumann-{L}aplacian in symmetric regions.
\newblock {\em J. Math. Phys. 56\/} (2015).

\bibitem{NiNussbaum1985}
{\sc Ni, W.-M., and Nussbaum, R.~D.}
\newblock Uniqueness and nonuniqueness for positive radial solutions of
  ${\Delta u} + f(u, r) = 0$.
\newblock {\em Commun. Pure Appl. Math. 38\/} (1985), 67--108.

\bibitem{Nussbaum1971}
{\sc Nussbaum, R.~D.}
\newblock The fixed point index for local condensing maps.
\newblock {\em Ann. Mat. Pura Appl. (4) 89}, 1 (dec 1971), 217--258.

\bibitem{PacellaRuizSicbaldi2024}
{\sc Pacella, F., Ruiz, D., and Sicbaldi, P.}
\newblock Nontrivial solutions to the relative overdetermined torsion problem
  in a cylinder.
\newblock {\em arXiv:2404.09272\/} (2024).

\bibitem{PacellaTralli2020}
{\sc Pacella, F., and Tralli, G.}
\newblock Overdetermined problems and constant mean curvature surfaces in
  cones.
\newblock {\em Rev. Mat. Iberoam. 36\/} (2020), 841--867.

\bibitem{Rabinowitz1971}
{\sc Rabinowitz, P.~H.}
\newblock Some global results for nonlinear eigenvalue problems.
\newblock {\em J. Funct. Anal. 7\/} (1971), 487--513.

\bibitem{Rabinowitz1986}
{\sc Rabinowitz, P.~H.}
\newblock {\em Minimax Methods in Critical Point Theory with Applications to
  Differential Equations}.
\newblock American Mathematical Society, 1986.

\bibitem{RosRuizSicbaldi2017}
{\sc Ros, A., Ruiz, D., and Sicbaldi, P.}
\newblock A rigidity result for overdetermined elliptic problems in the plane.
\newblock {\em Commun. Pure Appl. Math. 70}, 7 (apr 2017), 1223--1252.

\bibitem{RosRuizSicbaldi2019}
{\sc Ros, A., Ruiz, D., and Sicbaldi, P.}
\newblock Solutions to overdetermined elliptic problems in nontrivial exterior
  domains.
\newblock {\em Journal of the European Mathematical Society 22}, 1 (sep 2019),
  253--281.

\bibitem{Serrin1971}
{\sc Serrin, J.}
\newblock A {S}ymmetry {P}roblem in {P}otential {T}heory.
\newblock {\em Arch. Ration. Mech. Anal. 43}, 4 (1971), 304--318.

\bibitem{Sicbaldi2010}
{\sc Sicbaldi, P.}
\newblock New examples of extremal domains for the first eigenvalue of the
  {L}aplacian in flat tori.
\newblock {\em Calc. Var. Partial Differential Equations 37\/} (2010),
  329--344.

\bibitem{Willem1996}
{\sc Willem, M.}
\newblock {\em Minimax Theorems}.
\newblock Birkhäuser, 1996.

\end{thebibliography}

\end{document}